\date{\today}
\tikzset{mono/.style={>-stealth}} 
\tikzset{epi/.style={-twotriang}} 
\tikzset{arrow/.style={->}}
\tikzset{arrowshorter/.style={->, shorten <=2pt, shorten >=2pt}}
\tikzset{twoarrowlonger/.style={double,double distance=1.5pt,
shorten <=5pt,shorten >=6pt,
decoration={markings,mark=at position -4pt with {\arrow[scale=1.75]{>}}},
preaction={decorate}}} 
\tikzset{mapstikz/.style={-stealth, 
decoration={markings,mark=at position 0pt with {\arrow[scale=0.5]{|}}}, preaction={decorate}}}
\tikzset{dot/.style={circle,draw,fill,inner sep=1pt}}
\newcommand{\Q}{\mathbb{Q}}
\renewcommand{\S}{\mathbb{S}}
\newcommand{\Z}{\mathbb{Z}}
\newcommand{\C}{\mathbb{C}}
\newcommand{\uA}{\underline{A}}
\newcommand{\uJ}{\underline{J}}
\newcommand{\uM}{\underline{M}}
\newcommand{\upi}{\underline{\pi}}
\newcommand{\KU}{KU}
\newcommand{\KUG}{{\KU_G}}
\newcommand{\RU}{\underline{RU}}
\newcommand{\uRQ}{\underline{\RQ}}
\newcommand{\RQ}{R\Q}
\newcommand{\Sp}{\textnormal{Sp}}
\DeclareMathOperator{\coker}{coker} 
\DeclareMathOperator{\colim}{colim} 
\DeclareMathOperator{\res}{res} 
\DeclareMathOperator{\tr}{tr} 
\DeclareMathOperator{\Cl}{Cl}
\newcommand{\into}{\hookrightarrow}
\newcommand{\Mack}{\textnormal{Mack}}
\newcommand{\Mod}{\textnormal{Mod}}
\newcommand{\ssfrac}[2]{\mathchoice{\raisebox{5pt}{$#1$}\!\big/\!\raisebox{-5pt}{$#2$}}{{}^{#1}\!/_{\!#2}}{ #1 /  #2}{ #1 /  #2}}
\newcommand{\bigsfrac}[2]{\displaystyle \ssfrac{#1}{#2}}
\newcommand{\cokpsi}{\underline{\mathrm{cok}}}
\crefname{equation}{}{}
\theoremstyle{plain}
\newtheorem{theorem}[equation]{Theorem}
\newtheorem{corollary}[equation]{Corollary}
\newtheorem{proposition}[equation]{Proposition}
\newtheorem{lemma}[equation]{Lemma}
\theoremstyle{definition}
\newtheorem{example}[equation]{Example}
\newtheorem{remark}[equation]{Remark}
\newtheorem{notation}[equation]{Notation}
\numberwithin{equation}{section}
\begin{document}

\title{The  homotopy of the $KU_G$-local equivariant sphere spectrum}

\author[T. Carawan]{Tanner {N.} Carawan}
\address{Tanner N. Carawan}
\email{tc2bb@virginia.edu}
\author[R. E. Field]{Rebecca Field}
\address{Rebecca Field}
\email{fieldre@jmu.edu}
\author[B. J. Guillou]{Bertrand {J}. Guillou}
\address{Bertrand J. Guillou}
\email{bertguillou@uky.edu}
\author[D. Mehrle]{David Mehrle}
\address{David Mehrle}
\email{davidm@uky.edu}
\author[N. J. Stapleton]{Nathaniel {J}. Stapleton}
\address{Nathaniel J. Stapleton}
\email{nat.j.stapleton@uky.edu}
\thanks{
Guillou was supported by NSF grant DMS-2003204.
Stapleton was supported by NSF grant DMS-1906236 and a Sloan Fellowship.
This collaboration was made possible by NSF RTG grant DMS-1839968.
}

\maketitle

\begin{abstract}
We compute the  homotopy Mackey functors of the $\KU_G$-local equivariant sphere spectrum when $G$ is a finite $q$-group for an odd prime $q$, building on the degree zero case from  \cite{BGS}. 
\end{abstract}

\section{Introduction}

Work of Adams--Baird (unpublished) \cite{AB} 
and Ravenel \cite[Theorems 8.10, 8.15]{Rav}, (\emph{cf}.~\cite[Corollaries 4.5, 4.6]{Bousfield}) calculates the homotopy groups of the sphere spectrum localized at complex $K$-theory.

Both the sphere spectrum $\S$ and the  complex $K$-theory spectrum $\KU$ admit equivariant refinements. It is natural to ask if the calculation of Adams--Baird and Ravenel can be done equivariantly. When $q$ is an odd prime and $G$ is a finite $q$-group, Bonventre and the third and fifth authors calculated the homotopy Mackey functor $\upi_0 L_{\KU_G}\S_G$ of the localization of the equivariant sphere spectrum $\S_G$ with respect to equivariant complex $K$-theory $\KU_G$ \cite[Theorem 1.1]{BGS}. In this paper, we calculate the remaining homotopy Mackey functors of this localization: $\upi_n L_{\KU_G}\S_G$ for $n \neq 0$.

\begin{theorem}
	Let $G$ be a $q$-group for an odd prime $q$, and let $\ell$ be any integer that is primitive mod $|G|$. The homotopy Mackey functors of the $\KU_G$-local equivariant sphere spectrum $L_{\KU_G}\S_G$ are as in the table below.
\[
   \begin{array}{@{}l@{\hspace{1cm}}l@{}}
        \toprule
        		n 
			& 
				\upi_n L_{\KU_G} \S_G   
			\\
        \midrule 
	        	0 
			& 
				\uRQ \otimes (\Z \oplus \Z/2)^\dagger 
			\\ 
        		-2 
			& 
				\coker\big( \RU \xrightarrow{\psi^\ell - 1} \RU\big) \otimes \Q/\Z
			\\ 
                8k\neq 0  \textnormal{ or } 8k + 2
		    & 
		    	\displaystyle \uRQ \otimes \Z/2 
					\vphantom{\RU \xrightarrow{\psi^\ell-1}\RU} 
			\\
		        2k-1\neq -1
		    & 
		    	\uRQ \otimes \pi_{2k-1} L_{\KU} \S\big[\tfrac{1}{q}] 
						\oplus 
				\coker\big(\RU_q^\wedge\{\beta^k\} 
						\xrightarrow{\psi^\ell - 1 } 
						\RU_q^\wedge\{\beta^k\}\big) 
			\\
				\textnormal{otherwise}
			& 
				0 \vphantom{\RU \xrightarrow{\psi^\ell-1}\RU}
			\\
        \bottomrule
		\multicolumn{2}{l}{\footnotesize{\dagger \text{\cite[Theorem 1.1]{BGS}}}}
   \end{array}
\]
\end{theorem}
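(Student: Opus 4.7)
The plan is to apply an arithmetic fracture square to $L_{\KUG}\S_G$, decomposing it into its rationalization and its $q$-completion---since $|G|$ is a power of the odd prime $q$, these are the only primes that matter---compute each piece separately by Mackey-functor analogs of the Adams--Baird--Ravenel techniques, and then reassemble via the Mayer--Vietoris long exact sequence of homotopy Mackey functors arising from that square. The input from \cite{BGS} gives $\upi_0$ and sets up the necessary equivariant localization infrastructure.

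For the rational part, the expectation is that $L_{\KUG}\S_G \otimes \Q$ is determined by the classical rational homotopy of $L_{\KU}\S$ together with the Mackey functor $\uRQ$, essentially because $\KUG \otimes \Q$ has $\uRQ$ in each even degree as its homotopy Mackey functor and because Bousfield localization commutes appropriately with rationalization. The non-equivariant input $\pi_* L_{\KU}\S$ contributes $\Z$ in degree $0$, $\Q/\Z$ in degree $-2$, a $\Z/2$ in degrees $8k$ and $8k+2$, and the $[\tfrac{1}{q}]$-invertible piece of $\pi_{2k-1} L_{\KU}\S$ in odd degrees; smashing with $\uRQ$ produces each $\uRQ \otimes (-)$ summand in the table.

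For the $q$-complete part, the central tool is an equivariant Bousfield--Ravenel fiber sequence
\[
(L_{\KUG}\S_G)_q^\wedge \longto (\KUG)_q^\wedge \xrightarrow{\psi^\ell - 1} (\KUG)_q^\wedge,
\]
where $\ell$ is primitive mod $|G|$ so that $\psi^\ell$ topologically generates the relevant quotient of $\Z_q^\times$ acting on $\KUG$-theory. Using Bott periodicity, $\upi_{2k}(\KUG)_q^\wedge = \RU_q^\wedge\{\beta^k\}$ with vanishing odd homotopy, so the long exact sequence associated to the fiber sequence identifies the $q$-complete contribution to $\upi_{2k-1}$ with $\coker(\psi^\ell - 1)$ on $\RU_q^\wedge\{\beta^k\}$, provided $\psi^\ell - 1$ is injective in that bidegree. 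Feeding this through the arithmetic-fracture Mayer--Vietoris sequence produces the $q$-adic direct summands of the answer in odd degrees, and the connecting map from $\upi_0$-rational to $\upi_{-2}$-$q$-adic manufactures the $\coker(\psi^\ell - 1) \otimes \Q/\Z$ appearing in degree $-2$.

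The main obstacle is establishing injectivity of $\psi^\ell - 1$ on $\RU_q^\wedge\{\beta^k\}$ for $k \neq 0$ and pinning down its cokernel at the Mackey-functor level for an arbitrary $q$-group. I expect to handle this by induction on $|G|$, using that every nontrivial $q$-group has a normal subgroup of index $q$ and reducing compatibility questions for restriction and transfer to the cyclic case, where character theory makes $\psi^\ell$ explicit as the $\ell$-th power map on roots of unity. A secondary technical task is rigorously justifying the equivariant Bousfield--Ravenel fiber sequence and the arithmetic fracture square in this setting, which should follow from the general machinery assembled in \cite{BGS} together with the standard identification of $(\KUG)_q^\wedge$ with a form of equivariant $K(1)$-local $K$-theory.
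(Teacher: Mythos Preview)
Your overall architecture (fracture square plus the fiber sequence $L_{\KUG/q}\S_G \to (\KUG)_q^\wedge \xrightarrow{\psi^\ell-1}(\KUG)_q^\wedge$) matches the paper, but there is a genuine gap in the fracture-square step. You propose decomposing $L_{\KUG}\S_G$ into its rationalization and its $q$-completion, asserting that ``these are the only primes that matter.'' That is not so: the $\Z/2$'s in degrees $8k$ and $8k+2$, and the summand $\uRQ\otimes\pi_{2k-1}L_{\KU}\S[1/q]$, are contributions from primes $p\neq q$ (in particular $p=2$) and cannot appear in the rationalization, which kills all torsion. Your sentence claiming that the rational part contributes $\Z/2$ in degrees $8k$ and $8k+2$ is therefore incoherent. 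The paper uses the full arithmetic square over \emph{all} primes $p$; for each $p\neq q$ it invokes the identification $\upi_* L_{\KUG/p}\S_G \cong \uRQ\otimes \pi_* L_{\KU/p}\S$ from \cite{BGS}, and for the degree $-2$ calculation it also needs the nontrivial fact that $\cokpsi^\wedge_p \cong (\uRQ)^\wedge_p$ for $p\neq q$ so that the product over all primes of $\upi_{-1}$ can be rewritten as $\prod_p \cokpsi^\wedge_p$ and then collapsed to $\Q/\Z\otimes\cokpsi$. Your ``connecting map from $\upi_0$-rational to $\upi_{-2}$-$q$-adic'' does not produce this; the $\Q/\Z$ arises as the cokernel of rationalization on the product over all primes of $\upi_{-1}$.

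A secondary point: your plan to prove injectivity of $\psi^\ell-1$ on $\RU^\wedge_q\{\beta^k\}$ by induction on $|G|$ via normal subgroups is unnecessarily elaborate. The paper observes directly that $\psi^\ell$ permutes the basis of irreducible representations (since $\ell$ is coprime to $|G|$), so $\psi^\ell-1$ acts by $\ell^k S - I$ for a permutation matrix $S$, and a determinant computation modulo $\ell$ gives injectivity for $k\neq 0$ in one stroke, with no induction and no reduction to the cyclic case.
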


The structure of the cokernel of $\psi^\ell - 1$ is described in \cref{cokerpsiell}. 

\begin{remark}
	 These homotopy groups are independent of the choice of $\ell$, so long as it is primitive modulo $|G|$.
\end{remark}

To perform this computation, we make use of a fiber sequence of equivariant spectra involving Adams operations. For $G$ any odd $q$-group and  $\ell$ primitive mod $|G|$, there is a fiber sequence
	\begin{equation} 
    	L_{\KU_G/q}\S_G 
    		\longrightarrow 
    	(\KU_G)_q^\wedge 
    		\xrightarrow{\psi^\ell - 1} 
    	(\KU_G)_q^\wedge.
	\end{equation}
To use this to understand the homotopy Mackey functors of $L_{\KU_G/q}\S_G$, one must contend with the action of $\psi^\ell-1$ on the homotopy Mackey functors of $\KU_G$. Recall that $\upi_* \KU_G \cong \RU[\beta, \beta^{-1}]$, where $|\beta|=2$ and that $\psi^\ell$ is a ring map with $\psi^\ell(\beta) = \ell \beta$. It is not too hard to see that the kernel of $\psi^\ell-1 \colon \upi_{2k} \KU_G \to \upi_{2k} \KU_G$ is trivial for $k \neq 0$. Thus the question is really to understand the cokernel of $\psi^\ell-1$. We accomplish this in  \cref{sec:algebra}, where we give an explicit description of the values of the Mackey functor $\coker(\psi^\ell-1)$. With these in hand, we then use the arithmetic fracture square to obtain the global calculation in \cref{sec:homotopy}.

\subsection{Conventions}
\label{conventions}
Throughout this paper, we fix an odd prime $q$ and a finite $q$-group $G$. The integer $\ell$ will always be assumed to be coprime to the order of $G$, and at times $\ell$ will furthermore be assumed to be primitive modulo $|G|=q^j$. For $j > 2$, this is equivalent to requiring $j$ to be primitive modulo $q^2$ (see, for example, \cite{Narkiewicz}*{Lemma~1.10}).

\subsection{Acknowledgements}
This work began at an NSF RTG-funded workshop held in Lexington, Virginia in August 2022.
We would like to thank Julie Bergner, Nick Kuhn, and the other organizers 
of this workshop. We would also like to thank William Balderamma for numerous helpful discussions.

\section{Preliminaries}

\subsection{Representation rings and Green functors}

Recall the following commutative rings associated to $G$: 
\begin{itemize}
	\item The complex (resp.\ rational) representation ring $RU(G)$ (resp.\ $\RQ(G)$) is the Groth\-endieck group of isomorphism classes of finite-dimensional complex (resp.\ rational) $G$-representations under direct sum. The product is induced by the tensor product of $G$-representations. 
	\item The ring of complex-valued class functions $\Cl(G,\C)$ is the ring of functions $G \to \C$ which are constant on conjugacy classes of elements in $G$. 
\end{itemize} 
These rings are related by the following pair of ring homomorphisms: 
\[ 
	\begin{tikzcd}
		R\Q(G) \ar[r] & 
		RU(G) \ar[r, hook, "\chi"] & 
		\Cl(G,\C)
	\end{tikzcd}
\]
The first of these homomorphisms 
is base change from $\Q$ to $\C$, and the second is the character map. In particular, note that the character map $\chi \colon RU(G) \to \Cl(G,\C)$ is injective and embeds the complex representation ring as a subring of the ring of class functions. It will occasionally be convenient to calculate in the image of the character map rather than with complex representations themselves. 

These four commutative rings can all be upgraded to Green functors. We denote the Green functor versions of these rings by underlining them; for example, $\uRQ$ is the Green functor with $\uRQ(G/H) = \RQ(H)$. The same relationships hold among the Green functors as do among the commutative rings: there is a sequence of Green functor homomorphisms 
\[ 
	\begin{tikzcd}
		\uRQ \ar[r] & 
		\RU \ar[r, hook, "\chi"] & 
		\underline{Cl}
	\end{tikzcd}
\]

Let $A(G)$ be the Burnside ring of $G$, and let $\uA$ be the Burnside ring $G$-functor. There is a Green functor homomorphism $\uA \to \uRQ$ given levelwise by taking a finite $G$-set to the associated permutation representation. When $G$ is a $p$-group, the Ritter--Segal theorem \cite{Ritter,Seg-perm} says that $\uA \to \uRQ$ is surjective; we name its kernel $\uJ$ and deduce an isomorphism $\uA / \uJ \cong \uRQ$. 
The ideal $J(G)$ admits a nice description as the ideal of $A(G)$ generated by all virtual $G$-sets $X$ such that $|X^g| = 0$ for all $g \in G$.
The article \cite{BGS} uses the notation $\uA/\uJ$ throughout, but here we use the simpler notation $\uRQ$.

\subsection{Equivariant homotopy theory} 

Let $\Sp^G$ denote the category of genuine equivariant $G$-spectra. Examples include the $G$-equivariant sphere spectrum $\S_G$ and the $G$-spectrum of $G$-equivariant complex topological $K$-theory $\KU_G$. 

The homotopy of genuine $G$-spectra is naturally Mackey-functor valued. 
For the primary spectra in question in this paper, we have
\[
	\upi_0\S_G = \uA \quad \text{ and } \quad \upi_* \KU_G = \RU[\beta, \beta^{-1}] \text{ with } |\beta| = 2.
\]

If $E$ and $X$ are $G$-spectra, let $L_EX$ denote the Bousfield localization of $X$ at $E$. In particular, when $E = \S_G /p$ and $X$ is any spectrum, this localization is the $p$-completion of $X$, denoted by $X_p^\wedge := L_{\S_G/p} X$. If $X$ is already a localization $X = L_EY$, the localization of $X$ at $\S_G/p$ may be written 
\[
	L_{E/p} Y = L_{\S_G/p} L_E Y. 
\]
When $E = \S_G \wedge H\Q$ is the rational equivariant sphere, we obtain the rationalization of $X$, denoted $X \otimes \Q := L_{H_G \Q \otimes \uA} X$. 

The $p$-completion and the rationalization are related by a homotopy pullback square of $G$-spectra, called the arithmetic fracture square. 
When $X = L_{\KU_G}\S_G$, this is the square: 
\begin{equation}
\label{fracturesquare}
\begin{tikzcd}
	L_{KU_G}\S_G
	\ar[r]
	\ar[d]
	&
	\prod_pL_{KU_{G/p}}\S_G
	\ar[d]
	\\
	\Q\otimes L_{KU_G}\S_G
	\ar[r]
	&
	\Q\otimes\prod_pL_{KU_{G/p}}\S_G.
\end{tikzcd}
\end{equation} 
See \cite{TMF}*{Proposition~2.2 of Chapter~6} for a general version of the arithmetic square, from which \eqref{fracturesquare} can be deduced.
This is a useful tool for computing homotopy of $G$-spectra.

\section{The cokernel of $\psi^\ell -1$ acting on $\upi_*\KUG$}
\label{sec:algebra}

Recall that for $\ell \in \Z$ the Adams operation $\psi^\ell \colon \KU_G(X) \to \KU_G(X)$ is a ring homomorphism natural in the $G$-space $X$. In this section, we analyze $\psi^\ell - 1$ as a map on the complex 
representation ring of $G$ and on related objects. Recall \cref{conventions}: the 
 integer $\ell$ will always be assumed to be coprime to the order of $G$, and at times $\ell$ will furthermore be assumed to be primitive modulo $|G|=q^j$. 
 The following is Exercise~9.4 of \cite{Serre}.

\begin{lemma}
\label{AdamsOpsPermuteIrreps}
	The Adams operation $\psi^\ell \colon RU(G) \to RU(G)$ permutes the basis of irreducible representations if $\ell$ is coprime to $|G|$.
\end{lemma}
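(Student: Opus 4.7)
My plan is to factor the action of $\psi^\ell$ through the character map $\chi \colon RU(G) \into \Cl(G, \C)$ and identify it there with a Galois action that manifestly permutes irreducible characters.

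First, I would recall the standard character-theoretic formula for the Adams operation: if $V$ is a complex $G$-representation with character $\chi_V$, then $\chi_{\psi^\ell V}(g) = \chi_V(g^\ell)$. Since the character map is injective, it suffices to show that the class function $g \mapsto \chi_V(g^\ell)$ is the character of an irreducible representation whenever $\chi_V$ is.

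Second, I would reinterpret this operation via the Galois group. The character values $\chi_V(g)$ are sums of eigenvalues of $\rho(g)$, each an $|G|$-th root of unity. Because $\ell$ is coprime to $|G|$, the element $\ell \in (\Z/|G|)^\times$ corresponds to a Galois automorphism $\sigma_\ell \in \mathrm{Gal}(\Q(\zeta_{|G|})/\Q)$ with $\sigma_\ell(\zeta) = \zeta^\ell$. Raising each eigenvalue to the $\ell$-th power and summing then gives $\chi_V(g^\ell) = \sigma_\ell(\chi_V(g))$, so $\psi^\ell$ on $RU(G)$ is identified under $\chi$ with postcomposition by $\sigma_\ell$ on the subring of characters.

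Third, I would show that this Galois action permutes the irreducibles. Given an irreducible $V$, one can twist its matrix coefficients by $\sigma_\ell$ to produce a new $G$-representation ${}^{\sigma_\ell}V$ whose character is $\sigma_\ell \circ \chi_V$; this guarantees that $\sigma_\ell \circ \chi_V$ is again the character of an honest representation. Irreducibility is preserved because the inner product $\langle \chi_V, \chi_V \rangle$ is a (rational) integer, hence fixed by $\sigma_\ell$, so $\langle \sigma_\ell\chi_V, \sigma_\ell \chi_V \rangle = 1$. Finally, $\sigma_\ell$ is a bijection on $\Cl(G, \C)$ (with inverse induced by the Galois inverse of $\sigma_\ell$), so it permutes the finite set of irreducible characters, and therefore $\psi^\ell$ permutes the corresponding $\Z$-basis of $RU(G)$.

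The only delicate point is the identification of $\psi^\ell$ with $\sigma_\ell$ on characters; once that is in hand, the rest reduces to well-known facts about the Galois action on characters of finite groups and should be essentially formal.
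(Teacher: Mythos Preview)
Your argument is correct, but it takes a more elaborate route than the paper's. The paper works entirely with class functions and the criterion that a virtual character $\chi$ is irreducible if and only if $\chi(e)\geq 0$ and $\langle \chi,\chi\rangle = 1$: since $\ell$ is coprime to $|G|$, the $\ell$th-power map is a bijection on $G$, so the sum defining $\langle \psi^\ell\chi,\psi^\ell\chi\rangle$ is just a reindexing of the sum for $\langle \chi,\chi\rangle$, and $\chi(e^\ell)=\chi(e)$. That's the whole proof---no Galois theory, no twist construction. Your approach, identifying $\psi^\ell$ with the Galois automorphism $\sigma_\ell$ of $\Q(\zeta_{|G|})$ acting on characters, is a genuinely different and conceptually richer argument: it explains \emph{why} $\psi^\ell$ permutes irreducibles (Galois conjugation of representations) rather than just verifying it numerically, and this viewpoint is often what one wants when relating Adams operations to arithmetic. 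The cost is that you need to check that complex conjugation commutes with $\sigma_\ell$ (which it does, the Galois group being abelian) to push $\sigma_\ell$ through the Hermitian inner product, and you invoke the twist construction to guarantee the resulting class function is an honest character---both fine, but more than strictly necessary here.
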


\begin{proof}
	Recall that a class function $\chi$ is the character of an irreducible representation if and only if $\chi(e)\geq 0$ and $\langle \chi, \chi \rangle=1$. 
	On a class function $f$, the Adams operation $\psi^\ell$ acts as $\psi^\ell(f)(g) = f(g^\ell)$.
	Since $\ell$ is coprime to $|G|$, every element has an $\ell$th root, so that the $\ell$th power determines a bijection  on $G$. 
	It follows that the Adams operation preserves the  inner product.
\end{proof}

\begin{lemma}\label{pcompletepsiell}
	Suppose that $\ell$ is coprime to $|G|$.
 	The Adams operation $\psi^\ell$ on $\RU$ is a homomorphism of Green functors.
\end{lemma}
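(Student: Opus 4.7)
To show that $\psi^\ell$ is a Green functor homomorphism, I need to verify two things: (1) that each component $\psi^\ell \colon RU(H) \to RU(H)$ is a ring homomorphism, and (2) that the collection $\{\psi^\ell\}_{H \leq G}$ defines a natural transformation of Mackey functors, i.e.\ it commutes with the restriction, conjugation, and transfer maps. Property (1) is classical and can be cited directly (or deduced from the fact that $\psi^\ell$ is a power operation / lambda-ring map on each $RU(H)$).

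For property (2), the plan is to exploit the injective character map $\chi \colon \RU \hookrightarrow \underline{Cl}$ from the preliminaries. Since $\chi$ is a monomorphism of Mackey functors, compatibility of $\psi^\ell$ with a structure map on $\RU$ can be checked after applying $\chi$, where $\psi^\ell$ acts by the simple formula $(\psi^\ell f)(g) = f(g^\ell)$. The restriction map on class functions is just restriction of functions $f \mapsto f|_K$ for $K \le H$, and conjugation is $f \mapsto f(g^{-1}(-)g)$; in both cases the identity $(\psi^\ell f)(g) = f(g^\ell)$ visibly commutes with the operation, so these two compatibilities are immediate.

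The main obstacle is compatibility with the transfer, since in general $\psi^\ell$ does \emph{not} commute with induction of representations. Here is where the hypothesis that $\ell$ is coprime to $|G|$ enters decisively. On class functions, the transfer from $K$ to $H$ is computed by the induction formula
\[
(\tr_K^H f)(h) = \frac{1}{|K|} \sum_{\substack{x \in H \\ x^{-1}hx \in K}} f(x^{-1}hx).
\]
Applying $\psi^\ell$ to either $\tr_K^H f$ or $\tr_K^H(\psi^\ell f)$ produces the same summand $f(x^{-1}h^\ell x)$; the two expressions differ only in the indexing condition, namely whether one sums over $x$ with $x^{-1}hx \in K$ or with $x^{-1}h^\ell x \in K$. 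The equivalence of these conditions reduces to the assertion that for any $y \in H$, one has $y \in K$ if and only if $y^\ell \in K$. Since $\ell$ is coprime to $|G|$, it is coprime to the order of $y$, so $y^\ell$ generates the same cyclic subgroup as $y$; hence $y$ is a power of $y^\ell$ and the equivalence is immediate. This will complete the verification.

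I would organize the proof by first reducing to the statement on $\underline{Cl}$ via injectivity of $\chi$, then handling restriction and conjugation in one line each, and finally devoting the bulk of the argument to the transfer case via the induction formula and the coprime-order lemma above. The step about $y^\ell \in K \Leftrightarrow y \in K$ is the only place the hypothesis $(\ell, |G|) = 1$ is used, and it is the crux of the lemma.
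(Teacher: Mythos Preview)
Your proposal is correct and follows essentially the same route as the paper: reduce via the injective character map to class functions, note that compatibility with restriction (and conjugation) is immediate, and check compatibility with transfer using the explicit induction formula, where the two sums have the same summand $f(x^{-1}h^\ell x)$ but differ in the indexing condition $x^{-1}hx\in K$ versus $x^{-1}h^\ell x\in K$. Your justification of the equivalence of these conditions via the observation that $y$ and $y^\ell$ generate the same cyclic subgroup (since $\ell$ is coprime to $|G|$) is in fact slightly more explicit than the paper's phrasing, which simply invokes that the $\ell$th power map is a bijection; both amount to the same thing.
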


\begin{proof}
The Adams operation $\psi^\ell$ is a levelwise ring homomorphism, and it is straightforward that $\psi^\ell$ commutes with restriction.  The main point is to show that $\psi^\ell$ commutes with induction of representations. To see this, we can use the character map to embed $\RU$ into the Green functor of class functions. As this is levelwise an injection, it suffices to see that $\psi^\ell$ commutes with induction for class functions. Here, the formula (see \cite{Serre}*{Section~7.2} is
\[
	\mathrm{Ind_H^G}(f)(g) = \frac1{|H|} \sum_{\substack{\gamma \in G, \\ \gamma^{-1}g\gamma \in H}} f(\gamma^{-1}g\gamma).
\]
As $\psi^\ell(f)(g) = f(g^\ell)$, comparing the formula for $\psi^\ell\mathrm{Ind_H^G}(f)$ at $g$ with $\mathrm{Ind_H^G}(\psi^\ell f)$ at $g$, one finds that they differ only in that the former sums over $\gamma^{-1} g^\ell \gamma$ in $H$, whereas the latter sums over $\gamma^{-1} g \gamma$ in $H$. Since the $\ell$th power is a bijection on $H$, as in the proof of \cref{AdamsOpsPermuteIrreps}, the two sums are the same.
\end{proof}

We next consider the endomorphism $\psi^\ell-1$ on $\RU$.
This will later appear as the endomorphism $\psi^\ell-1$ on the nonnegative homotopy Mackey functors of $KU_G$, whose $\Z$-graded 
homotopy Mackey functors are $\RU[\beta^{\pm 1}]$, with $\beta$ in degree 2.
Recall that $\psi^\ell$ acts on $\beta^d$ as multiplication by $\ell^d$ \cite[Proposition 3.2.2]{AtKthy}.

\begin{proposition} \label{psiellinjective}
	Suppose that $\ell$ is coprime to $|G|$.
	The Mackey functor homomorphism $\psi^\ell - 1 \colon \RU\{\beta^d\} \to \RU\{\beta^d\}$ is injective for $d > 0$.
\end{proposition}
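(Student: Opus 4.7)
The plan is to check injectivity of the Mackey functor morphism $\psi^\ell - 1$ levelwise. Fix a subgroup $H \le G$; since $|H|$ divides $|G|$, the integer $\ell$ is coprime to $|H|$, so \cref{AdamsOpsPermuteIrreps} applies at $H$. Using that $\psi^\ell$ is a ring map with $\psi^\ell(\beta) = \ell\beta$, under the identification $\RU\{\beta^d\}(G/H) \cong RU(H)$ the $H$-level of $\psi^\ell - 1$ is the $\Z$-linear endomorphism $x \mapsto \ell^d \psi^\ell(x) - x$.

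Next I would invoke \cref{AdamsOpsPermuteIrreps} to describe $\psi^\ell$ on $RU(H)$ as a permutation $\sigma = \sigma_H$ of the irreducible characters $\{\rho_i\}_{i\in I}$ of $H$, so that $\psi^\ell(\rho_i) = \rho_{\sigma(i)}$. Expanding a putative kernel element as $x = \sum_i a_i \rho_i$ with $a_i \in \Z$, the equation $\ell^d\psi^\ell(x) = x$ becomes the system $a_i = \ell^d a_{\sigma^{-1}(i)}$ for every $i \in I$. Iterating this identity around a $\sigma$-orbit $O \subseteq I$ of length $m$ collapses it to $(\ell^{md}-1)a_i = 0$ in $\Z$ for each $i \in O$.

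To finish, one needs $\ell^{md} \ne 1$ in $\Z$ for every orbit length $m \ge 1$. Since $d > 0$, this holds whenever $|\ell| > 1$, which is the situation of interest throughout the paper (when $\ell$ is primitive modulo $|G| = q^j$ for $q$ an odd prime one in particular has $|\ell| > 1$). Hence every $a_i$ vanishes, so $x = 0$, proving injectivity at level $H$. The only real move in the argument is reducing to the irreducible basis so that \cref{AdamsOpsPermuteIrreps} turns an integer-linear question into a combinatorial statement about $\sigma$-orbits; beyond that the conclusion is immediate, and I do not expect a serious obstacle.
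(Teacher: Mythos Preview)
Your argument is correct and follows essentially the same approach as the paper: reduce to a levelwise check, invoke \cref{AdamsOpsPermuteIrreps} to express $\psi^\ell$ as a permutation of the irreducible basis, and then verify that $\ell^d S - I$ has trivial kernel. The only cosmetic difference is the final step---the paper computes $\det(\ell^d S - I)\equiv (-1)^m\pmod{\ell}$ (using $\ell\geq 2$), whereas you chase the recursion $a_i=\ell^d a_{\sigma^{-1}(i)}$ around each orbit to get $(\ell^{md}-1)a_i=0$; both arguments need $|\ell|>1$, which matches the paper's standing convention.
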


\begin{proof}
	This proceeds as the proof of \cite[Proposition 6.8]{BGS}. It suffices to show that this homomorphism is levelwise injective. 
By \cref{AdamsOpsPermuteIrreps}, $\psi^\ell$ acts by permuting the basis of irreducibles in $RU(G)$. If $S$ is the associated permutation matrix, then $\psi^\ell-1$ acts by $\ell^d S - I$, where $I$ is the identity matrix. To show that this matrix is injective as a linear transformation, it suffices to show that it has a nonzero determinant.
	
	 If $d > 0$, this is a matrix with integer entries and $\det(\ell^d S - I) \equiv (-1)^m \pmod \ell$, where $m$ is the number of rows of $S$. 
	 Therefore, $\det(\ell^d S - I) = a \ell + (- 1)^m$ for some $a \in \Z$ (note that $\ell \geq 2$). In particular, it is nonzero. 
\end{proof}

\begin{remark}
\label{psiellnotinjectived0}
The statement of \cref{psiellinjective} in the case $d=0$ does not hold. Indeed, \cite{BGS}*{Proposition~6.7} identifies the kernel of $\psi^\ell-1$ on $RU(G)$ as $\RQ(G)$.
\end{remark}

The result also holds for negative $d$, but there $\ell$ must be invertible in order to define $\psi^\ell$ on $\RU\{\beta^d\}$. We therefore pass to $q$-completion, as this will be the case in which this homomorphism is later considered.

\begin{corollary}
	Suppose that $\ell$ is coprime to $|G|$.
	The Mackey functor homomorphism $\psi^\ell - 1 \colon \RU^\wedge_q\{\beta^d\} \to \RU^\wedge_q\{\beta^d\}$ is injective for $d \neq 0$.
\end{corollary}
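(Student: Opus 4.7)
The plan is to split into the cases $d > 0$ and $d < 0$ and reduce each to the determinant argument already used in the proof of \cref{psiellinjective}.

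For $d > 0$, I would observe that for every subgroup $H \leq G$, the representation ring $RU(H)$ is a finitely generated free abelian group (basis: the irreducible representations), so the $q$-completion of the Mackey functor $\RU\{\beta^d\}$ is obtained levelwise by tensoring with $\Z_q$. Since $\Z_q$ is flat over $\Z$, applying $- \otimes \Z_q$ to the levelwise injection provided by \cref{psiellinjective} preserves injectivity, yielding the result in this case.

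For $d < 0$, I would rerun the determinant computation of \cref{psiellinjective} directly over $\Z_q$. Set $e = -d > 0$. On the free $\Z_q$-module $RU(H) \otimes \Z_q \cdot \beta^d$, the map $\psi^\ell - 1$ is given by the matrix $\ell^{-e} S - I$, where $S$ is the permutation matrix of $\psi^\ell$ on the basis of irreducibles supplied by \cref{AdamsOpsPermuteIrreps}; the entries lie in $\Z_q$ because $\ell$ is coprime to $q$ and is therefore a unit in $\Z_q$. Its determinant equals $\ell^{-em}\det(S - \ell^e I)$, where $m$ is the number of irreducible representations of $H$. The scalar $\ell^{-em}$ is a unit in $\Z_q$, and the integer $\det(S - \ell^e I)$ reduces modulo $\ell$ to $\det(S) = \pm 1$, so it equals $\pm 1 + a\ell$ for some $a \in \Z$, which is nonzero provided $|\ell| \geq 2$ (the same mild hypothesis used in \cref{psiellinjective}). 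Hence the matrix is injective levelwise, and the corollary follows.

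I do not foresee any substantial obstacle: both cases reduce to elementary determinant calculations that are direct variants of the one in \cref{psiellinjective}. The only real role of the $q$-completion is to make sense of $\ell^{-e}$ in the $d < 0$ case; once $\ell$ is inverted, the argument for negative $d$ is formally identical to the positive case, with $\det(S) = \pm 1$ playing the role that $\det(-I) = (-1)^m$ played before.
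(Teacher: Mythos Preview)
Your proposal is correct and follows essentially the same approach as the paper: for $d>0$ you use flat base change along $\Z \hookrightarrow \Z_q^\wedge$, and for $d<0$ you factor out the unit $\ell^{d}$ (equivalently $\ell^{-e}$) from the determinant and reduce the remaining integer matrix $S - \ell^{-d} I$ modulo $\ell$ to see $\det(S) = \pm 1$. The paper's argument is the same, only more tersely written.
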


\begin{proof}
	In the case that $d$ is positive, this follows from \cref{psiellinjective} by flat base change along $\Z \into \Z^\wedge_q$.
	 For $d < 0$, we argue as in \cref{psiellinjective}. First, $\det(\ell^d S - I) = (\ell^{d})^r \det(S - \ell^{-d} I)$, where $r$ is the number of rows in the matrix. Now $S - \ell^{-d} I$ is an integer matrix with $\det(S - \ell^{-d} I) \equiv \det(S) \pmod \ell$. The permutation matrix $S$ has nonzero determinant, so $\ell^d S - I$ does as well.  
\end{proof}

Having considered the kernel, we now turn to the cokernel.
In order to get a closed form answer, we again
pass to completions, first completing at $q$ in \cref{cokerpsiell} and then completing away from $q$ in \cref{proposition:qnotplocal}.

\begin{notation}
	We will write $\cokpsi\{d\}$ for the cokernel of 
    \(
        \psi^\ell -1\colon \RU\{\beta^d\} \to \RU\{\beta^d\}.
    \)
    We will also write $\cokpsi_p^\wedge\{d\}  = \cokpsi\{d\} \otimes \Z_p^\wedge$ for the cokernel of 
    \(
        \psi^\ell -1\colon \RU_p^\wedge\{\beta^d\} \to \RU_p^\wedge\{\beta^d\},
    \)
    and similarly for the $q$-complete version. When $d = 0$, we sometimes drop the degree from the notation and simply write $\cokpsi$ or $\cokpsi_p^\wedge$. 
\end{notation}

\begin{proposition}
	\label{cokerpsiell}
	Let $\ell$ be primitive mod $|G|=q^j$. The Mackey functor $\cokpsi^\wedge_q\{d\}$ is given at level $G/H$ by:
	\begin{enumerate}[(a)]
		\item for $d \neq 0$, 
	 \[ 
  \cokpsi^\wedge_q\{d\}
		\cong 
		\bigoplus_{\textnormal{cyclic } [C] } \bigsfrac{\Z}{q^{\nu_q(\ell^{d\phi(|C|)} - 1)}} ,
	\]
where the direct sum runs over conjugacy classes of cyclic subgroups $C$ of $H$, $\varphi$ is Euler's totient function, and $\nu_q$ is the $q$-adic valuation. When $|C| = q^k$ with $k \neq 0$, then 
	\[
		\nu_q(\ell^{d \varphi(|C|)} - 1) = {k + \nu_q(d)}.
	\]
		\item for $d = 0$, 
			\[ 
              \cokpsi^\wedge_q
				 \cong \bigoplus_{\textnormal{cyclic } [C]  } \Z_q^\wedge,
			\]
			where the direct sum again runs over conjugacy classes of cyclic subgroups $C$ of $H$.
	\end{enumerate}

\noindent The restriction and transfer in the cokernel are inherited from those in $\RU_q^\wedge$.
\end{proposition}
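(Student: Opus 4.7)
Since $\psi^\ell-1$ is a Mackey functor endomorphism of $\RU$ by \cref{pcompletepsiell}, it is also an endomorphism of $\RU_q^\wedge\{\beta^d\}$, and its cokernel is a Mackey functor whose restriction and transfer are automatically inherited from $\RU_q^\wedge$. So it suffices to compute the underlying abelian group at each level $G/H$.

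Fix a subgroup $H\le G$ of order $q^i$. First I would record the auxiliary fact that $\ell$ remains primitive modulo $q^i$: the reduction $(\Z/q^j)^\ast\to(\Z/q^i)^\ast$ is a surjection of cyclic groups for $q$ odd, so it carries generators to generators. Consequently $\psi^\ell$ generates the Galois action of $\mathrm{Gal}(\Q(\zeta_{q^i})/\Q)$ on the irreducible characters of $H$, and its orbits on $\mathrm{Irr}(H)$ coincide with the Galois orbits. By a classical result (see \cite{Serre}*{Chapter~13}, \emph{cf.}~\cite{BGS}*{Proposition~6.7}), these orbits are in bijection with conjugacy classes of cyclic subgroups $[C]\le H$, and the orbit corresponding to $[C]$ has exactly $\varphi(|C|)$ elements, each having character field $\Q(\zeta_{|C|})$.

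This decomposition of $\mathrm{Irr}(H)$ into $\psi^\ell$-orbits induces a decomposition of $\RU_q^\wedge(H)\{\beta^d\}$ as a $\Z_q^\wedge[\psi^\ell]$-module into a direct sum indexed by conjugacy classes $[C]$ of cyclic subgroups of $H$. On a single orbit of size $m=\varphi(|C|)$, $\psi^\ell$ acts by a cyclic shift of order $m$ composed with multiplication by $\ell^d$ (from $\beta^d$). Identifying this summand with $\Z_q^\wedge[x]/(x^m-1)$ (where $x$ plays the role of the shift), the cokernel of $\psi^\ell-1$ on this summand becomes
\[
    \Z_q^\wedge[x]/\bigl(x^m-1,\ \ell^d x-1\bigr).
\]
Since $\ell$ is a unit in $\Z_q^\wedge$, the second relation lets us substitute $x=\ell^{-d}$ into the first, giving $\Z_q^\wedge/(\ell^{-dm}-1)\cong\Z_q^\wedge/(\ell^{dm}-1)$. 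For $d=0$ this is $\Z_q^\wedge$, proving part~(b); for $d\ne 0$ this is the cyclic group $\Z/q^{\nu_q(\ell^{d\varphi(|C|)}-1)}$, proving part~(a).

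To finish, the explicit valuation formula when $|C|=q^k$ with $k\ge 1$ follows from the lifting-the-exponent lemma: since $\ell$ is primitive modulo $q^{k+1}$, the element $u:=\ell^{\varphi(q^k)}$ satisfies $u\equiv 1\pmod{q^k}$ but $u\not\equiv 1\pmod{q^{k+1}}$, so $\nu_q(u-1)=k$, and LTE for the odd prime $q$ then gives $\nu_q(u^d-1)=\nu_q(u-1)+\nu_q(d)=k+\nu_q(d)$. I expect the main obstacle to be the identification of $\psi^\ell$-orbits on $\mathrm{Irr}(H)$ with conjugacy classes of cyclic subgroups together with the orbit-size count; once that is in hand, the rest is formal manipulation of $\Z_q^\wedge[x]/(x^m-1,\ell^d x-1)$ and a standard LTE computation.
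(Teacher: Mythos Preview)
Your approach is essentially the paper's: decompose $RU(H)_q^\wedge\{\beta^d\}$ into $\psi^\ell$-orbits of irreducibles, compute the cokernel orbit by orbit, and identify the orbits with conjugacy classes of cyclic subgroups of $H$. Your per-orbit cokernel computation via $\Z_q^\wedge[x]/(x^m-1,\ \ell^d x-1)\cong\Z_q^\wedge/(\ell^{dm}-1)$ is a clean repackaging of the paper's row/column reduction of the block $\ell^d S-I$, and your lifting-the-exponent argument for $\nu_q(\ell^{d\varphi(q^k)}-1)=k+\nu_q(d)$ is equivalent to the paper's direct computation with $(\Z/q^r)^\times$.

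The one substantive difference is in the step you yourself flag as the main obstacle: determining the $\psi^\ell$-orbit structure on $\mathrm{Irr}(H)$. Your citations to \cite{Serre}*{Chapter~13} and \cite{BGS}*{Proposition~6.7} establish that the \emph{number} of Galois orbits equals the number of conjugacy classes of cyclic subgroups, but neither result directly gives the orbit \emph{sizes} as $\varphi(|C|)$; the Schur-index/character-field route would require further argument. The paper handles this more concretely: it uses that the character map gives a $\C[\psi^\ell]$-module isomorphism $\C\otimes RU(H)\cong\Cl(H,\C)$, so the permutation action of $\psi^\ell$ on irreducibles has the same cycle type as its action on the indicator functions $1_{[g]}$. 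On that basis $\psi^\ell$ acts transparently by $[g]\mapsto[g^{\ell}]$, and the orbit containing $[g]$ is exactly $\{[g^a]:a\in(\Z/|g|)^\times\}$ since $\ell$ is primitive. That is the argument you should substitute for the citation at your flagged step; the rest of your write-up then goes through as in the paper.
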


\begin{proof}
	The cokernel is computed levelwise; at level $G/H$, we have 
	\[ 
		\psi^\ell - 1 \colon RU(H)_q^\wedge\{\beta^d\} \to RU(H)_q^\wedge\{\beta^d\}.
	\] 
	By \cref{AdamsOpsPermuteIrreps}, the Adams operation $\psi^\ell$ permutes the basis of irreducibles of $RU(H)$, and it continues to do so after flat base change along $\Z \to \Z_q^\wedge$. As in the proof of \cref{psiellinjective}, $\psi^\ell - 1$ acts by a matrix $\ell^d S - I$, where $S$ is a permutation matrix and $I$ the identity matrix. Reordering the basis of irreducibles if necessary, this becomes a block-diagonal matrix with blocks 
	\[ 
		\begin{bmatrix}
			-1 & \ell^d \\
			& -1 & \ell^d \\ 
			& & \ddots & \ddots \\ 
			& & & -1 & \ell^d \\
			\ell^d & & & & -1 
		\end{bmatrix}
		\sim
		\begin{bmatrix}
			1 \\
			& 1 \\
			& & \ddots \\
			& & & 1 \\
			& & & & \ell^{dt} - 1
		\end{bmatrix}
	\]
	which are equivalent to diagonal matrices as shown above, using a combination of row and column operations, where $t$ is the number of rows in this block. 
	When $d \neq 0$, each block contributes a factor of $\Z_q^\wedge / (\ell^{dt} - 1)$ to the cokernel. When $d = 0$, each block contributes a factor of $\Z_q^\wedge$. 

	It remains to count the number of blocks and their sizes. Each block corresponds to a $\psi^\ell$-orbit of irreducibles in $RU(H)$. Since $RU(H)$ is a free $\Z$-module of finite rank, we may base change to $\C$ and view the resulting ring as a $\C[\psi^\ell]$-module. Since the character map $\C \otimes RU(H) \to \Cl(H,\C)$ is a map of $\C[\psi^\ell]$-modules and $\C[\psi^\ell]$ is a PID, it suffices to understand the orbits of the $\psi^\ell$ action on a basis for class functions.  The Adams operation acts on class functions by $\psi^\ell(f)(g) = f(g^\ell)$. 
	Consider the basis of $\Cl(H,\C)$ given by the indicator functions $1_{[g]}$.
	Since $\ell$ is primitive mod $|H|$, two indicator functions $1_{[g]}$ and $1_{[h]}$ are in the same $\psi^\ell$-orbit if and only if $g$ and $h$ generate conjugate cyclic subgroups of $H$. Hence, there are as many $\psi^\ell$-orbits as the number of conjugacy classes of cyclic subgroups of $G$.
	 The size of an orbit is the number of generators for a cyclic subgroup; if $C$ is a nontrivial cyclic subgroup of $G$ with $|C| = q^k$, this is $\varphi(q^k) = q^k - q^{k-1} = q^{k-1} (q-1)$. 

Finally, we must understand 
    \[  
       \bigsfrac{\Z_q^\wedge}{(\ell^{d(q-1)q^{k-1}} - 1)}
            \cong 
        \bigsfrac{\Z}{q^{\nu_q(\ell^{d(q-1)q^{k-1}} - 1)}.}
    \] 
 For this, we need to know the largest value $r$ such that $\ell^{d(q-1)q^{k-1}} \equiv 1 \pmod{q^r}$. It helps to work additively. There is an isomorphism of abelian groups $(\Z/q^r)^{\times} \cong \Z/((q-1)q^{r-1})$. Since $\ell$ is a generator of $(\Z/q^r)^{\times}$, it maps to a generator of the right hand side. Since $dq^{k-1}(q-1) \equiv 0 \pmod{(q-1)q^{r-1}}$ when $r \leq k+v_q(d)$, we have  
	\[ 
		\nu_q(\ell^{d(q-1)q^{k-1}} - 1) 
			= k + \nu_q(d). 
	\]
 
So if $C$ is a nontrivial cyclic subgroup of order $q^k$, then the $\psi^\ell$-orbit corresponding to the conjugacy class of $C$
	contributes a factor of
	\[ 
		\bigsfrac{\Z_q^\wedge}{(\ell^{d(q-1)q^{k-1})}-1)} 
		\cong
		\bigsfrac{\Z}{q^{k + \nu_q(d)}}.
	\]
	The trivial cyclic subgroup contributes $\Z_q^\wedge / (\ell^d - 1) \cong \Z / q^{\nu_q(\ell^d - 1)}$.
\end{proof}

\begin{remark}
The formula for the cokernel of $\psi^\ell-1$ in the case $d=0$ holds integrally, before passage to $q$-completion, as can be seen from the proof.
\end{remark}

Levelwise, the formula for $\cokpsi^\wedge_q$ suggests that it is a quotient of $(\uRQ)^\wedge_q$. We show in \cref{ex:cokerpsiell} that this Mackey functor is not a cyclic $\uA^\wedge_q$-module.

\begin{example}
\label{ex:cokerpsiell}
	We calculate $\cokpsi^\wedge_q$ for $G = C_{q^2}$. Recall the representation ring Green functor $\RU_q^\wedge$: 
	\[ 	
		\begin{tikzcd}[row sep=50]
			\Z_q^\wedge[x]/(x^{q^2} - 1)
				\ar[d, bend right=50,
					"{
						x \mapsto y
					}"{left}
					]
				\\
			\Z_q^\wedge[y]/(y^q - 1)
				\ar[u, bend right=50,
					"{	\displaystyle
						y^k \mapsto \sum_{i=0}^{q-1} x^{iq+k}
					}"{right}
					]
				\ar[d, bend right=50, 
					"{
						y \mapsto 1
					}"{left} 
					]
				\\
			\Z_q^\wedge
				\ar[u, bend right=50, 
					"{	\displaystyle
						1 \mapsto \sum_{i=0}^{q-1} y^i
					}"{right}
					]
		\end{tikzcd}
	\]
	Here, $y$ is the class of the $C_q$-representation where the generator acts on the complex plane by a $q$-th root of unity, and $x$ is the class of the $C_{q^2}$-representation where the generator acts by a primitive $q^2$ root of unity. Since these are one-dimensional complex representations, the Adams operation takes $y$ to $y^\ell$ and $x$ to $x^\ell$. Hence, the Mackey functor homomorphism $\psi^\ell - 1$ takes the form
	\[ 	
		\begin{tikzcd}[row sep=50, column sep=100]
			\Z_q^\wedge[x]/(x^{q^2} - 1)
				\ar[d, bend right=50,
					"{
						\res_{C_q}^{C_{q^2}}
					}"{left}
					]
				\ar[r, "\displaystyle x^k \mapsto x^{k\ell} - x^k"]
				& 
			\Z_q^\wedge[x]/(x^{q^2} - 1)
				\ar[d, bend right=50,
					"{
						\res_{C_q}^{C_{q^2}}
					}"{left}
					]
				\\
			\Z_q^\wedge[y]/(y^q - 1)
				\ar[u, bend right=50,
					"{	
						\tr_{C_q}^{C_{q^2}}
					}"{right}
					]
				\ar[d, bend right=50, 
					"{
						\res_e^{C_q}
					}"{left} 
					]
				\ar[r, "\displaystyle y^k \mapsto y^{k\ell} - y^k"]
				&
			\Z_q^\wedge[y]/(y^q - 1)
				\ar[u, bend right=50,
					"{	
						\tr_{C_q}^{C_{q^2}}
					}"{right}
					]
				\ar[d, bend right=50, 
					"{
						\res_e^{C_q}
					}"{left} 
					]
				\\
			\Z_q^\wedge
				\ar[u, bend right=50, 
					"{	
						\tr_e^{C_q}
					}"{right}
					]
				\ar[r, "\displaystyle 0"]
				&
			\Z_q^\wedge
				\ar[u, bend right=50, 
					"{	
						\tr_{C_q}^{C_{q^2}}
					}"{right}
					]
				\\[-1cm]
			\RU_q^\wedge
				\ar[r, "\psi^\ell - 1"]
				& 
			\RU_q^\wedge 
		\end{tikzcd}
	\]

	At the $C_q$-level, the quotient by the image of $\psi^\ell -1$ identifies all nontrivial representations because $\ell$ is primitive mod $q$. At the top level, the quotient places the nontrivial representations into two classes: the class of $x$ and the class of $x^q$. Hence, the cokernel is:
	\[ 
		\begin{tikzcd}[row sep=huge,ampersand replacement=\&]
			\Z_q^\wedge\{1, x, x^q\} 
				\ar[d, 
					bend right=30,
					"{\begin{pmatrix}
						1 & 0 & 1 \\ 0 & 1 & 0
					\end{pmatrix}}"{left}
					bend right=50,
					"{
						\begin{pmatrix}
							1 & 0 & 1 \\ 0 & 1 & 0
						\end{pmatrix}
					}"{left}
					]
				\\
			\Z_q^\wedge\{1, y\}
				\ar[u, 
					bend right=50,
					"{
						\begin{pmatrix}
							1 & 0 \\ 
							0 & q \\ 
							q-1 & 0
						\end{pmatrix}
					}"{right}
					]
				\ar[d, bend right=50, 
					"{
						\begin{pmatrix}
							1 & 1
						\end{pmatrix}
					}"{left} 
					]
				\\
			\Z_q^\wedge
				\ar[u, bend right=50, 
					"{
						\begin{pmatrix}
							1 \\ q-1
						\end{pmatrix}
					}"{right} 
					]
				\\[-1cm]
			\cokpsi_3^\wedge
		\end{tikzcd}
	\]	
	This Mackey functor is not free; it contains a copy of the Burnside Mackey functor generated by the element $1$ at the top level, and the quotient by this subfunctor has $q$-torsion. 
\end{example}

\begin{example}
\label{ex:cokerpsiC9}
Let $G = C_9$. Below we present the Mackey functors $\cokpsi_3^\wedge\{1\}$ and $\cokpsi_3^\wedge\{2\}$, which are the cokernels of $\psi^\ell - 1$ on $\RU_3^\wedge\{\beta\}$ and $\RU_3^\wedge\{\beta^2\}$, respectively. 
	\[ 
		\begin{tikzcd}[row sep=huge,ampersand replacement=\&]
			\Z/3\{x^3\} \oplus \Z/{9}\{x\}
				\ar[d, 
					bend right=40,
					"{\begin{pmatrix}
						0 & 1 
					\end{pmatrix}}"{left}
					]
				\\
			\Z/3\{ y\}
				\ar[u, 
					bend right=40,
					"{
						\begin{pmatrix}
							0 \\ 
							3 \\ 
						\end{pmatrix}
					}"{right}
					]
				\ar[d, bend right=40, ]
				\\
			0
				\ar[u, bend right=40, ]
				\\[-1cm]
				\cokpsi^\wedge_3\{1\}
		\end{tikzcd}
		\qquad \qquad
		\begin{tikzcd}[row sep=huge,ampersand replacement=\&]
			\Z/3\{1,x^3\} \oplus \Z/{9}\{x\}
				\ar[d, 
					bend right=40,
					"{\begin{pmatrix}
						1 & 1 & 0 \\
						0 & 0 & 1
					\end{pmatrix}}"{left}
					]
				\\
			\Z/3\{1, y\}
				\ar[u, 
					bend right=40,
					"{
						\begin{pmatrix}
							1 & 0 \\ 
							2 & 0 \\
							0 & 3 \\ 
						\end{pmatrix}
					}"{right}
					]
				\ar[d, bend right=40, 
					"{\begin{pmatrix}
						1 & 1 
					\end{pmatrix}}"{left}
				]
				\\
			\Z/3
				\ar[u, bend right=40, 
					"{\begin{pmatrix}
						1 \\ 2 
					\end{pmatrix}}"{right}
				]
				\\[-1cm]
				\cokpsi^\wedge_3\{2\}
		\end{tikzcd}
	\]	
\end{example}

In the case of completing at $p$ different from $q$, it turns out that the cokernel of $\psi^\ell-1$ has a familiar form.

\begin{proposition}
\label{proposition:qnotplocal}
	Let $\ell$ be primitive mod $|G|=q^j$ and let $p$ be a prime different from $q$. There is an isomorphism of $G$-Mackey functors
	\[ 
            \cokpsi^\wedge_p
			\cong 
			(\uRQ)_p^\wedge.
	\]
\end{proposition}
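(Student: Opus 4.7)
The plan is to exploit the finite order of $\psi^\ell$ on $\RU$ together with a norm-averaging idempotent, with a separate argument needed when $p\mid q-1$. By \cref{AdamsOpsPermuteIrreps}, $\psi^\ell$ permutes the basis of irreducibles of $RU(H)$ at each level, and since $\ell$ is primitive modulo $|G|=q^j$, it acts on $\RU$ with exact order $n = \varphi(|G|) = q^{j-1}(q-1)$. By \cref{pcompletepsiell}, the norm $N := \sum_{i=0}^{n-1}(\psi^\ell)^i$ is a Mackey functor endomorphism of $\RU$ satisfying $(\psi^\ell-1)N = N(\psi^\ell-1) = 0$ and $N|_{\uRQ} = n\cdot \mathrm{id}$.

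When $p\nmid n$ (i.e.\ $p\neq q$ and $p\nmid q-1$), the scaled norm $N/n$ is a well-defined Mackey functor idempotent on $\RU_p^\wedge$ whose image is exactly $\uRQ_p^\wedge$. Using the $\psi^\ell$-orbit block structure from the proof of \cref{cokerpsiell}, one checks on each block that $\mathrm{im}(\psi^\ell-1) = \mathrm{im}(S-I)$ equals the sum-zero sublattice of $\Z_p^t$, which is precisely $\ker N$. This yields a direct-sum splitting $\RU_p^\wedge \cong \uRQ_p^\wedge \oplus \mathrm{im}(\psi^\ell-1)_p^\wedge$ of Mackey functors, from which $\cokpsi_p^\wedge \cong \uRQ_p^\wedge$ follows immediately.

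The remaining case $p\mid q-1$ requires a separate treatment because $N/n$ is no longer $p$-adically defined. Here I would instead use the Burnside ring decomposition: since $p\nmid |G|$, the ring $A(G)_p^\wedge$ admits primitive idempotents $e_H$ indexed by conjugacy classes of subgroups of $G$. For non-cyclic $H$, the mark $\phi^{\langle g\rangle}(e_H)$ vanishes for every $g\in G$, so $e_H \in \uJ$ (the kernel of $\uA\twoheadrightarrow \uRQ$) and therefore acts as zero on both $\uRQ_p^\wedge$ and $\cokpsi_p^\wedge$ (the latter via the factorization $\uA\to\uRQ\hookrightarrow \RU \twoheadrightarrow \cokpsi$). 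Both Mackey functors thus decompose into blocks indexed by conjugacy classes of cyclic subgroups, each a free $\Z_p$-module of rank one, and a block-by-block comparison of $\uA_p^\wedge$-module structures produces the desired isomorphism. The main obstacle is this last step: the naive map $\uRQ_p^\wedge \to \cokpsi_p^\wedge$ (sending the rational irreducible indexed by $C$ to $\varphi(|C|) = q^{k-1}(q-1)$ times the generator of the corresponding block in $\cokpsi$) has $p$-torsion cokernel when $p\mid q-1$, so the isomorphism must be assembled non-canonically from the block matching rather than realized as a single natural transformation.
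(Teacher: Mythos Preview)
Your norm-idempotent argument for the case $p\nmid q-1$ is correct and is a genuinely different route from the paper. The paper works uniformly for all $p\neq q$ via the equivalence $\Mack(G)_{(p)}\simeq\bigoplus_{(H)}\Mod_{\Z_{(p)}[W_G(H)]}$, computing $V_H(\RU_p^\wedge)$ explicitly (it is $\Z_p^\wedge[x]/\phi_{q^k}(x)$ for cyclic $H$ and vanishes otherwise) and then computing the cokernel of $\psi^\ell-1$ on that quotient. Your averaging idempotent $N/n$ is more direct when it is available: it realizes the isomorphism $\uRQ_p^\wedge\to\cokpsi_p^\wedge$ as the restriction of the canonical quotient map $\RU_p^\wedge\to\cokpsi_p^\wedge$, whereas the paper's argument produces only an abstract isomorphism. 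The tradeoff is that your method is not uniform in $p$.

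Your treatment of the case $p\mid q-1$ has a genuine gap, and your own final paragraph flags it without closing it. Once you observe that the Burnside idempotents $e_H$ for non-cyclic $H$ annihilate both sides, you are reduced to comparing the pieces $V_C(\uRQ_p^\wedge)$ and $V_C(\cokpsi_p^\wedge)$ for cyclic $C$ as $\Z_p^\wedge[W_G(C)]$-modules; this is exactly the paper's strategy. But you assert without proof that each block of $\cokpsi_p^\wedge$ is free of rank one over $\Z_p^\wedge$. Knowing the levelwise ranks of $\cokpsi_p^\wedge$ (from the $d=0$ case of \cref{cokerpsiell}) does not give this: you need either to understand the transfer maps in $\cokpsi_p^\wedge$ well enough to compute the quotient $V_C$, or to use right-exactness of $V_C$ to reduce to computing the cokernel of $\psi^\ell-1$ on $V_C(\RU_p^\wedge)\cong\Z_p^\wedge[x]/\phi_{q^k}(x)$. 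That last computation is precisely the content the paper supplies, and it is not automatic---one has to see, for instance, that $\phi_{q^k}(x)\cdot x^n\equiv q\cdot x^n$ in the cokernel for $n$ not divisible by $q^{k-1}$, which kills those classes after inverting $q$. You should also check that the $W_G(C)$-action on the resulting $\Z_p^\wedge$ is trivial on both sides; this is easy but needs saying. Saying that ``the isomorphism must be assembled non-canonically from the block matching'' is a correct diagnosis, but the block matching itself is the missing work.
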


To prove this proposition, we make use of explicit formulas for the equivalence of categories \cite[Proposition 7.6]{BGS} (originally due to \cite[Theorem A.9 and Proposition A.12]{GrMa}): 
\begin{equation}
\label{PLocalMackeySplits}
	\Mack(G)_{(p)} \xrightarrow[\simeq]{(V_H)} \bigoplus_{(H)} \Mod_{\Z_{(p)}[W_G(H)]}.
\end{equation}
Here, $\Mack(G)_{(p)}$ is the localization of the category of $G$-Mackey functors  at $p$, i.e. Mackey functors valued in $\Z_{(p)}$-modules rather than abelian groups. The sum on the right hand side of this equivalence runs over all conjugacy classes of subgroups of $G$. The functor $V_H$ sends $\uM \in \Mack(G)_{(p)}$ to the quotient of $\uM(G/H)$ by the subgroup generated by transfers from all proper subgroups of $H$ \cite[Proposition 7.10]{BGS}.

\begin{proof}[Proof of \cref{proposition:qnotplocal}]
	Under the equivalence of categories \cref{PLocalMackeySplits}, $(\uRQ)_p^\wedge$ maps to 
	\[ 
		V_H\left((\uRQ)_p^\wedge\right) = 
		\begin{cases}
			\Z_p^\wedge & H \text{ cyclic,}\\
			0 & \text{ otherwise}
		\end{cases}
	\]
	by \cite[Proposition 7.11]{BGS}. It suffices to show that $\cokpsi^\wedge_p$ has isomorphic image under this equivalence. 

If $H = C_{q^k}$ is a cyclic subgroup of $G$, $RU(C_{q^k})_p^\wedge \cong \Z_p^\wedge[x]/(x^{q^k} - 1)$. The ideal of transfers from proper subgroups is generated by transfers from $C_{q^{k-1}}$; each such transfer is a multiple of the cyclotomic polynomial $\phi_{q^k}(x) = \tr_{C_{q^{k-1}}}^{C_{q^k}}(1)$. Therefore, 
\[ 
	V_{C_{p^k}}(\RU_p^\wedge) \cong \Z_p^\wedge[x]/ \phi_{q^k}(x).
\]
See also \cite{TW95}*{Example~6.7}.

If $H$ is not cyclic, there exists a surjection $\theta\colon H \to C_q \times C_q$. 
In \cref{Lemma:qInTransferIdeal}, we will show that $q\in RU(C_q\times C_q)$ lies in the image of transfers from proper subgroups.
A double coset formula yields the commuting diagram
\[
	\begin{tikzcd}
	\displaystyle\bigoplus\limits_{C \lneqq C_q \times C_q }\hspace{-1em} RU(C)
	\ar[r,"\theta^*"]
	\ar[d,"\mathrm{tr}"]
	&
	\displaystyle\bigoplus\limits_{C \lneqq C_q \times C_q }\hspace{-1em} RU(\theta^{-1} C)
	\ar[d,"\mathrm{tr}"]
	\\
	RU(C_q\times C_q)
	\ar[r,"\theta^*"]
	&
	RU(H).
\end{tikzcd}
\]
Since $\theta^*$ is a ring homomorphism, $\theta^*(q) = q$. 
This shows that $q\in RU(H)$ lies in the image of transfers from proper subgroups.
Since $q$ becomes a unit after $p$-completion, $V_H(\RU_p^\wedge)$ is the quotient of $\RU_p^\wedge(H)$ by the unit ideal and therefore vanishes.
The rational version of this statement appears, for example, in \cite{Thev}*{Section~9}.

We have seen that
	\[ 
		V_H(\RU_p^\wedge) = 
		\begin{cases}
			\Z_p^\wedge[x]/\phi_{q^k}(x) & H \text{ cyclic and } |H| = q^k,\\
			0 & \text{ otherwise},
		\end{cases}
	\]
	where $\phi_{q^k}(x)$ is the $q^k$-th cyclotomic polynomial. Hence, it remains to determine the cokernel of 
\begin{equation}
	\label{equation:AdamsOpOnZqAdjoinZeta}
		\Z_p^\wedge[x]/\phi_{q^k}(x) 
			\xrightarrow{\psi^\ell - 1} 
		\Z_p^\wedge[x]/\phi_{q^k}(x)
	\end{equation}

	We may write $\Z_p^\wedge[x]/\phi_{q^k}(x) \cong \Z_p^\wedge\{ x,x^2,x^3,\dots, x^{(q-1)q^{k-1}}\}$.
	The Adams operation $\psi^\ell$ cyclically permutes the $q-1$ powers of $x^{q^{k-1}}$ in this basis.
	Thus we may decompose
	\[
		\Z_p^\wedge[x]/\phi_{q^k}(x) \cong  A \oplus B
	\]
	as a $\Z_p^\wedge[\psi^\ell]$-module, where
	\[
		A = \Z_p^\wedge\{ x^{i q^{k-1}}  \mid  1 \leq i \leq q-1 \} 
	\]
	and
	\[
		B =  \Z_p^\wedge\{ x^n \mid q^{k-1} \text{ does not divide } n, 1 \leq n < (q-1)q^{k-1} \}.
	\]
	It then follows that the cokernel of $\psi^\ell-1$ on $A$ gives $\Z^\wedge_p$.
	We claim that  on $B$ the cokernel vanishes. 
	
	Primitivity of $\ell$ ensures that in $RU(C_{q^k}) \cong \Z[x]/(x^{q^k}-1)$, two monomials $x^{n_1}$ and $x^{n_2}$ are in the same $\psi^\ell$-orbit if and only if $n_1$ and $n_2$ have the same $q$-adic valuation, where $n_1$ and $n_2$ are both assumed to be less than $q^k$. It then follows that in the cokernel of $\psi^\ell-1$ on $RU(C_{q^k})$, the polynomial $\phi_{q^k}(x)\cdot x^n$ is equivalent to $q\cdot x^n$, so long as $n$ is not divisible by $q^{k-1}$. Thus in the cokernel of $\psi^\ell-1$ on the quotient ring $\Z[x]/\phi_{q^k}(x)$, there is a relation $q\cdot x^n=0$ when $n$ is not divisible by $q^{k-1}$.
 In particular, after completing at $p$, which is different from $q$, it follows that $x^n$ vanishes in the cokernel of $\psi^\ell-1$ on $\Z[x]/\phi_{q^k}(x)$.

	Hence, $\cokpsi^\wedge_p$ corresponds to $\Z_p^\wedge$ supported on the cyclic subgroups, and under the equivalence of categories, this is the same as $(\uRQ)_p^\wedge$. Since the equivalence \cref{PLocalMackeySplits} preserves and creates cokernels, we are done. 
\end{proof}

\begin{lemma}
\label{Lemma:qInTransferIdeal}
	The ideal of $RU(C_q \times C_q)$ generated by transfers from proper subgroups contains $q$. 
\end{lemma}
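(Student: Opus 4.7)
The plan is to exhibit $q$ explicitly as a $\Z$-linear combination of transfers of the trivial representation from the proper subgroups of $G = C_q \times C_q$. Write $\hat{G}$ for the character group; since $G$ is abelian of exponent $q$, every irreducible complex representation is one-dimensional, and $\hat{G}$ has order $q^2$. The proper subgroups of $G$ are the trivial subgroup $e$ and the $q+1$ subgroups $H_1, \dots, H_{q+1}$ of order $q$ (indexed, say, by the lines in the $\mathbb{F}_q$-vector space $G$).

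The computation proceeds via Frobenius reciprocity. For each subgroup $H \leq G$, the transfer $\mathrm{tr}_H^G(\mathbf{1}_H) = \mathrm{Ind}_H^G(\mathbf{1}_H)$ decomposes in $RU(G)$ as the sum of those characters $\chi \in \hat{G}$ whose restriction to $H$ is trivial, i.e.\ for which $H \subseteq \ker \chi$. I would note two things. First, $\mathrm{Ind}_e^G(\mathbf{1}_e)$ is the regular representation, so
\[
	\mathrm{Ind}_e^G(\mathbf{1}_e) = \sum_{\chi \in \hat{G}} \chi = \mathbf{1}_G + \sum_{\chi \neq \mathbf{1}_G} \chi.
\]
Second, for any $H_i$ of order $q$,
\[
	\mathrm{Ind}_{H_i}^G(\mathbf{1}_{H_i}) = \sum_{\chi \colon H_i \subseteq \ker \chi} \chi.
\]
The trivial character $\mathbf{1}_G$ appears in every such sum. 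Each nontrivial character $\chi \in \hat{G}$ has kernel of index exactly $q$, hence kernel equal to a unique $H_i$, and so contributes to exactly one of these $q+1$ inductions.

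Summing over $i = 1, \dots, q+1$ gives
\[
	\sum_{i=1}^{q+1} \mathrm{Ind}_{H_i}^G(\mathbf{1}_{H_i}) = (q+1)\, \mathbf{1}_G + \sum_{\chi \neq \mathbf{1}_G} \chi,
\]
and subtracting $\mathrm{Ind}_e^G(\mathbf{1}_e)$ cancels the nontrivial summands, yielding
\[
	\sum_{i=1}^{q+1} \mathrm{Ind}_{H_i}^G(\mathbf{1}_{H_i}) \; - \; \mathrm{Ind}_e^G(\mathbf{1}_e) \; = \; q \cdot \mathbf{1}_G \; = \; q
\]
in $RU(G)$. Since the left-hand side is manifestly an integer combination of transfers from proper subgroups, this shows $q$ lies in the transfer ideal. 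There is no real obstacle here: the only subtlety is the elementary counting argument that a nontrivial character of $C_q \times C_q$ has a unique order-$q$ kernel, which is immediate from the fact that $\hat{G}$ is an $\mathbb{F}_q$-vector space of dimension two.
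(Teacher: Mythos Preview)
Your proof is correct and establishes the same identity as the paper: $q$ equals the sum of the transfers $\tr_{H_i}^G(1)$ over the $q+1$ order-$q$ subgroups, minus the regular representation. The paper writes the regular representation as the product $\tr_L^H(1)\cdot \tr_R^H(1)$ (which of course equals $\tr_e^H(1)$) and verifies the identity by an explicit polynomial computation in $\Z[x,y]/(x^q-1,y^q-1)$, whereas your Frobenius-reciprocity counting argument is a bit cleaner; the underlying approach is the same.
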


\begin{proof}
	Let $H = C_q \times C_q$. The representation ring of $H$ is isomorphic as a commutative ring to $\Z[x,y] / (x^q - 1, y^q - 1)$, where $x$ and $y$ are the classes of rotation representations of the left and right factors, respectively. If $K$ is the subgroup of $H$ generated by an element $(\gamma^i,\gamma^j)$ with $i \neq 0$, then 
	\[ 
		\tr_K^{H}(1) = \sum_{k = 0}^{q-1} (x^i y^j)^k.
	\]
	We claim that 
	\[ 
		q = \sum_{K \leq H} \tr_{K}^{H}(1)  - \tr_{L}^{H}(1) \cdot \tr_{R}^{H}(1),
	\]
	where $L$ is the subgroup generated by $(\gamma,e)$ and $R$ is the subgroup generated by $(e,\gamma)$. This is a calculation. Recall that $H$ has $q+1$ distinct subgroups of order $q$: the subgroup $R$ generated by $(e,\gamma)$, and subgroups generated by elements $(\gamma,\gamma^j)$ for $j = 0,1,\ldots,q-1$. 
	
	\begin{align*}
		\sum_{K \leq H} \tr_{K}^{H}(1)& - \tr_L^H(1) \cdot \tr_R^H(1) 
		\\
		& = \sum_{j = 0}^{q-1} \tr_{\langle (\gamma,\gamma^j) \rangle}^H (1) + \tr_{R}^H(1) - \tr_L^H(1) \cdot \tr_R^H(1) \\
		& = \sum_{j = 0}^{q-1} \tr_{\langle (\gamma,\gamma^j) \rangle}^H (1) + \tr_R^H(1) \cdot (1 - \tr_L^H(1))\\
		&= \sum_{j=0}^{q-1} \left(1 + xy^j + x^2 y^{2j} + \ldots + x^{q-1} y^{j(q-1)} \right) +\\
		& \hspace*{1cm} \left(1 + y + y^2 + \ldots + y^{q-1}\right) \left( - x - x^2 - \ldots - x^{q-1} \right) \\
		&= q + \sum_{j=0}^{q-1} \left(xy^j + x^2 y^{2j} + \ldots + x^{q-1} y^{j(q-1)} \right) \\
		& \qquad - \sum_{k=0}^{q-1} \left( xy^k + x^2y^k + \ldots + x^{q-1}y^k \right)\\
		& = q.
	\end{align*}
	The last equality follows by a reindexing, recalling that these equations live in the ring $\Z[x,y] / (x^q - 1, y^q - 1)$. 
\end{proof}

\begin{example}
	Let $H = C_3 \times C_3$, and let $L = \langle (\gamma,e) \rangle$, $C = \langle (\gamma,\gamma) \rangle$, $D = \langle (\gamma,\gamma^2) \rangle$, and $R = \langle (e,\gamma) \rangle$ be its four subgroups of order $3$. Consider the representation ring Green functor for $C_3 \times C_3$. In this Green functor, we have: 
	\begin{align*}
		\tr_L^H(1) &= 1 + x + x^2 \\
		\tr_C^H(1) &= 1 + xy + x^2 y^2 \\
		\tr_D^H(1) &= 1 + x^2 y + xy^2 \\
		\tr_R^H(1) &= 1 + y + y^2
	\end{align*}

\noindent We can directly see that $3$ is contained in the ideal of $RU(H)$ generated by images of transfers from proper subgroups of $H$: 
	\begin{align*}
		\sum_{K \leq H} &\tr_K^{H}(1) - \tr_L^{H}(1) \cdot \tr_R^{H}(1) \\
			&= \big(\tr_L^H(1) + \tr_R^H(1) + \tr_C^H(1) + \tr_D^H(1)\big) - \tr_L^H(1) \cdot \tr_R^H(1)\\
			&= \left(1 + x + x^2 \right)
				+ \left(1 + y + y^2\right)
				+ \left(1 + xy + x^2 y^2 \right)
				+ \left(1 + x^2y + y^2x\right) \\
			& \qquad \qquad	- \left(1 + x + x^2\right)\left(1 + y + y^2\right)\\
			&= \left(4 + x + y + x^2 + xy + y^2 + x^2y + y^2x + x^2y^2\right) \\
			& \qquad 	- \left(1 + x + x^2 + y + xy x^2y + y^2 + y^2x + y^2 x^2\right)\\
			& = 3.
	\end{align*}
\end{example}

\section{The homotopy Mackey functors of $L_{\KUG}\S_G$}
\label{sec:homotopy}

Our strategy for understanding $L_{\KUG}\S_G$ is to use the fracture square \eqref{fracturesquare}. We begin by describing the homotopy Mackey functors of the local factors $L_{\KUG/p}\S_G$, both in the case $p=q$ and $p \neq q$, using the work of \cref{sec:algebra}.
With the local computations in hand, we then use the long exact sequence
\begin{equation}
\label{eq:LES}
	\cdots \to \Q\otimes\prod\limits_p\upi_{n+1} L_{\KUG/p}\S_G \to \upi_n L_{\KUG} \S_G \to \upi_n \Q \otimes L_\KUG \S_G \times \prod\limits_p \upi_n L_{\KUG/p} \S_G \to \dots
\end{equation}
arising from the fracture square \eqref{fracturesquare} to obtain the homotopy Mackey functors  $\upi_n  L_\KUG \S_G$.
We will use the fact that the rationalization $\Q\otimes L_\KUG \S_G$ is the Eilenberg-Mac~Lane spectrum for the rational Mackey functor $\Q\otimes \uRQ$ \cite{BGS}*{Lemma~9.1}.

\subsection{Local computations for $p = q$}
\label{sec:localpisq}

In this section, we compute the homotopy Mackey functors for $L_{KU_G/q}\S_G$.
The key tool is the following.

\begin{proposition}[{\cite[Propositions 5.3, 6.3]{BGS}}]
\label{fiberseq}
	If $\ell$ is primitive modulo $|G|$, then the Adams operation $\psi^\ell \colon (\KUG)_q^\wedge \to (\KUG)_q^\wedge$ is a well-defined map of $G$-spectra that participates in a fiber sequence
	\begin{equation} 
	\label{eq:fiberseq}
    	L_{\KU_G/q}\S_G 
    		\longrightarrow 
    	(\KU_G)_q^\wedge 
    		\xrightarrow{\psi^\ell - 1} 
    	(\KU_G)_q^\wedge.
	\end{equation}
\end{proposition}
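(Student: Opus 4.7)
The proposition packages two assertions that I would prove in sequence: first that $\psi^\ell$ descends to a well-defined self-map of the $q$-complete equivariant $K$-theory spectrum, and second that its difference with the identity fits into the claimed fiber sequence. This is attributed to \cite{BGS}*{Propositions~5.3, 6.3}, so the approach mirrors the proof there.

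For the first assertion, the obstruction is that $\psi^\ell$ is a priori only a ring endomorphism on the connective cover of $\KUG$, and extending it to the periodic theory (i.e., inverting the Bott class $\beta$) requires inverting $\ell$, since $\psi^\ell(\beta) = \ell \beta$. The plan is to observe that $\ell$ is coprime to $|G| = q^j$ (by the standing hypothesis), hence coprime to $q$, hence a unit in $\Z_q^\wedge$. After $q$-completion the element $\ell$ is invertible in the coefficients, so $\psi^\ell$ lifts to a self-map of $(\KUG)_q^\wedge$ of $G$-spectra. (One should be a bit careful about naturality and the rigidification of the Adams operation at the spectrum level; this is standard and handled in \cite{BGS}.)

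For the fiber sequence, let $F$ denote the fiber of $\psi^\ell - 1$. My strategy is to produce a comparison map $L_{\KUG/q}\S_G \to F$ and check it is an equivalence. First, because $\psi^\ell$ restricts to the identity on $\S_G$, the unit $\S_G \to (\KUG)_q^\wedge$ is equalized by $\psi^\ell$ and $1$, hence lifts canonically through $F$. Second, $F$ is $\KUG/q$-local, being the fiber of a map between $q$-complete $\KUG$-modules; so by universality the lift factors through $L_{\KUG/q}\S_G \to F$. Third, one checks this map is an equivalence by applying $\KUG/q \wedge (-)$: the resulting square of cofiber sequences reduces the claim to the statement that $\psi^\ell - 1$ acts invertibly on the positive degree (and relevant negative degree) homotopy of $\KUG/q \wedge (\KUG)_q^\wedge$, with the fiber accounting precisely for $\KUG/q$ in degree zero.

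The main obstacle will be this last step: controlling the action of $\psi^\ell-1$ on $\pi_*^G(\KUG/q \wedge (\KUG)_q^\wedge)$. This is the equivariant analogue of the classical Adams--Bousfield computation, and the key inputs are the explicit action of $\psi^\ell$ on $\RU[\beta^{\pm1}]$ together with \cref{psiellinjective} and its negative-degree corollary, which give injectivity in all nonzero degrees. Primitivity of $\ell$ modulo $|G|$ enters to guarantee that $\psi^\ell-1$ is surjective in all nonzero homotopical degrees after passing mod $q$; in degree zero one recovers a copy of $\pi_0^G(\KUG/q) \cong \RU/q$ contributing the base, which matches the image of $\S_G$. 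Once this is established, the induced map $L_{\KUG/q}\S_G \to F$ is a $\KUG/q$-equivalence between $\KUG/q$-local spectra, hence an equivalence.
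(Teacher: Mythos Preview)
The paper does not prove this proposition; it is imported from \cite{BGS} as a citation, with only the subsequent remark adding that primitivity is needed specifically to identify the fiber. So there is no argument in the paper to compare against directly.

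Your treatment of the first assertion is fine: the only obstruction to defining $\psi^\ell$ on the periodic spectrum is inverting $\ell$, and $q$-completion does that.

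For the fiber sequence, your verification step has a real gap. You propose to check that $L_{\KUG/q}\S_G \to F$ is an equivalence by smashing with $\KUG/q$ and invoking \cref{psiellinjective} and its corollary. But those results control $\psi^\ell-1$ on $\pi_*(\KUG)_q^\wedge \cong \RU_q^\wedge[\beta^{\pm1}]$, whereas what you need is control of $\mathrm{id}\wedge(\psi^\ell-1)$ on $\pi_*\big(\KUG/q \wedge (\KUG)_q^\wedge\big)$, the cooperations. Nothing in \cref{sec:algebra} speaks to that object, and $\psi^\ell$ is not a $\KUG$-module map, so the smashed map does not reduce to the one you have analyzed. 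Your assertion that ``$\psi^\ell-1$ is surjective in all nonzero homotopical degrees after passing mod $q$'' is about the wrong module.

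The argument one expects (and which the paper's remark points toward) instead identifies $L_{\KUG/q}\S_G$ as the continuous homotopy fixed points of the $\Z_q^\times$-action on $(\KUG)_q^\wedge$ by Adams operations. Since $q$ is odd, $\Z_q^\times$ is procyclic, and $\ell$ is a topological generator exactly when $\ell$ is primitive modulo $|G|$; the fiber of $\psi^\ell-1$ then computes the homotopy fixed points of the dense subgroup generated by $\psi^\ell$. This route makes transparent both why primitivity is required and why the fiber is independent of the particular primitive $\ell$, neither of which falls out of your approach.
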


Note that the fiber is independent of $\ell$ in the fiber sequence above. 

\begin{remark}
	The original proposition 5.3 in \cite{BGS} contains the assumption that $\ell$ is primitive mod $|G|$, but by \cite[Corollary 2.5]{HiKo}, in order to show that $\psi^\ell$ extends to a map of $G$-spectra, it suffices to assume that $\ell$ is coprime to $|G| = q^k$.
	On the other hand, in order to identify the fiber as the $KU_G/q$-local equivariant sphere, the additional primitivity assumption is required.
\end{remark}

Since $\upi_*\KUG \cong \RU[\beta, \beta^{-1}]$ with {$\beta$ in degree $2$}, the long exact sequence of homotopy Mackey functors associated to the fiber sequence \cref{eq:fiberseq} splits into four-term exact sequences: 
\[ 
	\begin{tikzcd}
		0 
			\ar[r] 
			&
		\upi_{2d} L_{\KUG/q} \S_G
			\ar[r]
			&
		\RU_q^\wedge\{\beta^d\}
			\ar[r, "\psi^\ell - 1"] 
			& 
		\RU_q^\wedge\{\beta^d\}
			\ar[r]
			&
		\upi_{2d-1} L_{\KUG/q} \S_G
			\ar[r]
			& 
		0
	\end{tikzcd}
\]
{Thus the homotopy Mackey functors of $L_{\KUG/q} \S_G$ follow from the work of \cref{sec:algebra}.}

\cref{psiellinjective} immediately implies the following.

\begin{corollary}
\label{cor:pi2dvanishes}
	For $d \neq 0$, $\upi_{2d} L_{\KUG/q} \S_G = 0$.
\end{corollary}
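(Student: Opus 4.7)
The proof is essentially a one-line deduction from the machinery already set up. The plan is to read off $\upi_{2d} L_{\KUG/q}\S_G$ from the four-term exact sequence displayed immediately before the corollary, and then invoke the injectivity result established in \cref{sec:algebra}.

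More precisely, I would first note that the fiber sequence of \cref{fiberseq} yields a long exact sequence of homotopy Mackey functors, and that since $\upi_{\mathrm{odd}}(\KUG)_q^\wedge = 0$, this long exact sequence decomposes into the four-term exact sequences
\[
    0 \to \upi_{2d} L_{\KUG/q}\S_G \to \RU_q^\wedge\{\beta^d\} \xrightarrow{\psi^\ell - 1} \RU_q^\wedge\{\beta^d\} \to \upi_{2d-1} L_{\KUG/q}\S_G \to 0
\]
exactly as recorded above the statement. In particular, $\upi_{2d} L_{\KUG/q}\S_G$ is identified with the kernel of $\psi^\ell - 1$ acting on $\RU_q^\wedge\{\beta^d\}$.

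The remaining step is to observe that this kernel vanishes when $d \neq 0$. This is precisely the content of the $q$-complete corollary to \cref{psiellinjective}, which asserts that $\psi^\ell - 1$ is injective on $\RU_q^\wedge\{\beta^d\}$ for all $d \neq 0$. Applying that result completes the proof.

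There is no real obstacle here: the Corollary is a formal consequence of the fiber sequence plus the already-proved injectivity statement. The only small subtlety is being careful about which version of injectivity is being cited — the statement as written references \cref{psiellinjective} (the uncompleted case, $d > 0$), but the argument actually requires the $q$-complete version that covers both positive and negative $d$; this is the corollary stated just after \cref{psiellinjective} and is what should be invoked.
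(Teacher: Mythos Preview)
Your proposal is correct and matches the paper's approach exactly: the paper simply states that \cref{psiellinjective} immediately implies the corollary, relying on the four-term exact sequence displayed just before. Your observation about the citation is apt—the paper points to \cref{psiellinjective}, but strictly speaking the $q$-complete corollary (covering negative $d$ as well) is what is needed, exactly as you note.
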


The $d=0$ case was previously computed:

\begin{proposition} \cite[Proposition 6.8]{BGS}
$\upi_0 L_{\KUG/q}\S_G \cong (\uRQ)^\wedge_q$.
\end{proposition}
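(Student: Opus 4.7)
The plan is to extract $\upi_0 L_{\KU_G/q}\S_G$ directly from the long exact sequence associated to the fiber sequence in \cref{fiberseq}. Since $\upi_*(\KU_G)_q^\wedge \cong \RU_q^\wedge[\beta,\beta^{-1}]$ with $|\beta|=2$, the odd homotopy Mackey functors of $(\KU_G)_q^\wedge$ vanish. The relevant portion of the long exact sequence around degree $0$ therefore reduces to
\[
	0 \longrightarrow \upi_0 L_{\KU_G/q}\S_G \longrightarrow \RU_q^\wedge \xrightarrow{\ \psi^\ell - 1\ } \RU_q^\wedge,
\]
identifying $\upi_0 L_{\KU_G/q}\S_G$ with the kernel of $\psi^\ell - 1$ acting on $\RU_q^\wedge$.

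Next, I would invoke the integral computation of this kernel, recalled in \cref{psiellnotinjectived0}: by \cite[Proposition 6.7]{BGS}, the kernel of $\psi^\ell - 1 \colon \RU \to \RU$ is precisely $\uRQ$. In other words, there is a short exact sequence of Mackey functors
\[
	0 \longrightarrow \uRQ \longrightarrow \RU \xrightarrow{\ \psi^\ell - 1\ } \RU.
\]
Since $\Z_q^\wedge$ is flat over $\Z$, tensoring the above short exact sequence with $\Z_q^\wedge$ preserves exactness, so the kernel of $\psi^\ell - 1$ on $\RU_q^\wedge$ is $\uRQ \otimes \Z_q^\wedge = (\uRQ)_q^\wedge$. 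Combining with the previous paragraph yields the desired isomorphism $\upi_0 L_{\KU_G/q}\S_G \cong (\uRQ)_q^\wedge$.

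There is no serious obstacle here: the argument is a direct consequence of \cref{fiberseq}, the vanishing of the odd homotopy of $\KU_G$, the integral identification of $\ker(\psi^\ell-1)$ from \cite{BGS}, and flatness of $q$-completion. The only minor point to verify is that the transfer/restriction structure on the kernel computed via flat base change agrees with that inherited from $\RU_q^\wedge$, which is immediate since the short exact sequence above is one of Mackey functors and flat tensor products of Mackey functors are computed levelwise.
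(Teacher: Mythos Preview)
Your argument is correct, but note that the paper does not actually supply a proof of this proposition: it simply records the result as a citation of \cite[Proposition~6.8]{BGS}. What you have written is a clean reconstruction of that argument from ingredients already assembled in the present paper --- namely the fiber sequence of \cref{fiberseq}, the four-term exact sequence displayed just above this proposition, the identification $\ker(\psi^\ell-1\colon \RU\to\RU)\cong\uRQ$ recalled in \cref{psiellnotinjectived0}, and flat base change along $\Z\hookrightarrow\Z_q^\wedge$ (as used, e.g., in the proof of \cref{psiellinjective}'s $q$-complete corollary). There is nothing to compare your approach against here; your proof is exactly the expected one and could stand in for the citation.
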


\begin{corollary}
\label{cor:pi2dmin1coker}
	The  Mackey functor $\upi_{2d-1} L_{\KUG/q} \S_G$ is
	\[
		\upi_{2d-1} L_{\KUG/q} \S_G \cong \cokpsi^\wedge_q\{d\} = 
		\coker\Big(\RU^\wedge_q\{\beta^d\} \xrightarrow{\psi^\ell-1} \RU^\wedge_q\{\beta^d\} \Big).
	\]
\end{corollary}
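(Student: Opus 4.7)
The plan is to read the corollary directly off the four-term exact sequence displayed immediately before its statement, so essentially no new work is required. The fiber sequence of \cref{fiberseq},
\[
    L_{\KU_G/q}\S_G
        \longrightarrow
    (\KU_G)_q^\wedge
        \xrightarrow{\psi^\ell - 1}
    (\KU_G)_q^\wedge,
\]
gives rise to a long exact sequence of homotopy Mackey functors. Since $\upi_*(\KU_G)_q^\wedge \cong \RU_q^\wedge[\beta,\beta^{-1}]$ is concentrated in even degrees, the long exact sequence degenerates into short four-term pieces, one for each integer $d$:
\[
    0
    \to \upi_{2d} L_{\KU_G/q}\S_G
    \to \RU_q^\wedge\{\beta^d\}
    \xrightarrow{\psi^\ell - 1} \RU_q^\wedge\{\beta^d\}
    \to \upi_{2d-1} L_{\KU_G/q}\S_G
    \to 0.
\]

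From exactness at the rightmost $\RU_q^\wedge\{\beta^d\}$, the map $\upi_{2d-1}L_{\KU_G/q}\S_G \to 0$ together with surjectivity of $\RU_q^\wedge\{\beta^d\} \to \upi_{2d-1}L_{\KU_G/q}\S_G$ identifies $\upi_{2d-1}L_{\KU_G/q}\S_G$ with $\coker(\psi^\ell - 1)$ on $\RU_q^\wedge\{\beta^d\}$, which by the notation introduced earlier is precisely $\cokpsi^\wedge_q\{d\}$. This is exactly the content of the corollary.

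There is no real obstacle to overcome: all the input has been assembled in the preceding material. In particular, unlike \cref{cor:pi2dvanishes}, we do not need the injectivity of $\psi^\ell - 1$, so the statement requires no hypothesis on $d$. It is worth emphasizing that the explicit structure of this cokernel (and in particular the formula at level $G/H$) was worked out in \cref{cokerpsiell}, so combining that proposition with the present corollary immediately yields the closed-form description of the odd homotopy Mackey functors of $L_{\KU_G/q}\S_G$ that will feed into the fracture-square computation of \cref{sec:homotopy}.
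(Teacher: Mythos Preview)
Your proposal is correct and matches the paper's approach exactly: the corollary is read off directly from the four-term exact sequence coming from the fiber sequence of \cref{fiberseq}, with no additional argument needed. The paper in fact offers no separate proof for this corollary, simply stating it as an immediate consequence of that exact sequence (and noting, as you do, that the cokernel was computed in \cref{cokerpsiell}).
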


This cokernel was computed in \cref{cokerpsiell}.

\begin{example}
The Mackey functors $\cokpsi^\wedge_3\{1\}$ and $\cokpsi^\wedge_3\{2\}$ were computed for $G=C_9$ in 
\cref{ex:cokerpsiC9}.
According to \cref{cor:pi2dmin1coker}, these agree with the homotopy Mackey functors $\upi_1 L_{KU_{C_{9}}/3} \S_{C_{9}}$ and $\upi_3 L_{KU_{C_{9}}/3} \S_{C_{9}}$.
\end{example}

\subsection{Local computations for $p \neq q$}
\label{sec:localpnotq}

Let $p$ be a prime that does not divide $|G| = q^k$. The calculation of the nonzero $p$-local homotopy groups of $L_{\KUG}\S_G$ was done in \cite{BGS}. For an odd prime $p$, recall the homotopy groups of $L_{\KU/p} \S$ 
{as originally calculated by Adams--Baird \cite{AB} and Ravenel \cite{Rav} and described more recently in }
\cite[Equation 2.3.8]{zhang}:
\[ 
	\pi_n L_{\KU\!/p} \S 
		\cong
		\begin{cases}
			\Z_p^\wedge 
				& 
			\text{if } n \in \{0, -1\},
				\\
			\Z/p^{\nu_p(k) + 1} 
				& 
			\text{if } n = 2k-1 \text{ and } (p-1) \mid k,
				\\
			0
				& 
			\text{otherwise.}
		\end{cases}
\]

For $p = 2$, the homotopy groups of $L_{\KU/2} \S$ are in \cite[Equation 2.3.13]{zhang}:
\[
	\pi_n L_{\KU\!/2} \S 
		\cong
		\begin{cases}
			\Z_2^\wedge \oplus \Z / 2 
				& 
			\text{if } n = 0,
				\\
			\Z_2^\wedge 
				& 
			\text{if } n = -1,
				\\
			\Z/2 \oplus \Z/2 
				& 
			\text{if } n \equiv 1 \pmod 8,
				\\
			\Z/2 
				& 
			\text{if } n \equiv 0,2 \pmod 8, \text{ and } n \neq 0, 
				\\
			\Z/2^{\nu_2(k) + 3} 
				& 
			\text{if } n = 4k - 1 \text{ and } n \neq -1,
				\\
			0 
				& 
			\text{otherwise.}
		\end{cases}
\]

\begin{proposition}[{\cite[Proposition 8.5]{BGS}}]
\label{prop:htpyLKUGmodp}
    Let $p\neq q$. 
	There is an isomorphism of graded Green functors 
	\[ 
		\upi_* L_{\KUG/p} \S_G \cong \uRQ \otimes \pi_* L_{\KU/p}\S. 
	\]
\end{proposition}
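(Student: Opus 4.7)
The plan is to follow the approach of \cite[Proposition 8.5]{BGS}. The key structural tool is the $p$-local Mackey functor splitting \cref{PLocalMackeySplits}, which reduces the identification $\upi_* L_{\KUG/p}\S_G \cong \uRQ \otimes \pi_* L_{\KU/p}\S$ to a comparison of $V_H$ applied to both sides for each conjugacy class of subgroups $H$ of $G$.

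On the target side, $\uRQ \otimes \pi_* L_{\KU/p}\S$ is supported only on cyclic subgroups: by \cref{proposition:qnotplocal} and its proof, $V_H((\uRQ)_p^\wedge) = \Z_p^\wedge$ when $H$ is cyclic and vanishes otherwise, so $V_H(\uRQ \otimes \pi_* L_{\KU/p}\S) \cong \pi_* L_{\KU/p}\S$ on cyclic $H$ and $0$ elsewhere. First I would establish the analogous support statement on the source side, using the argument of \cref{Lemma:qInTransferIdeal}: for non-cyclic $H$, the element $q \in RU(H)$ lies in the transfer ideal from proper subgroups and becomes a unit after $p$-completion, forcing $V_H$ of $(\KUG)_p^\wedge$ to vanish and hence forcing $V_H\bigl(\upi_* L_{\KUG/p}\S_G\bigr)$ to vanish as well.

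At each cyclic $H = C_{q^k}$, the remaining task is to identify $V_H\bigl(\upi_* L_{\KUG/p}\S_G\bigr)$ with $\pi_* L_{\KU/p}\S$. Since $p \neq q$, the $p$-completion $V_H(\RU_p^\wedge) = \Z_p^\wedge[x]/\phi_{q^k}(x)$ is an unramified cyclotomic extension of $\Z_p^\wedge$, and the Adams operations act through its Galois group. The plan is to combine a Galois/Morita descent argument with the non-equivariant $L_{\KU/p}$-local Adams operation fiber sequence (standard for $p$ odd, with the usual $\KO$-style modification at $p = 2$) to identify, at the $G/H$ level, the $\KU_H/p$-local sphere with the non-equivariant $L_{\KU/p}\S$.

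The main obstacle I expect is setting up this descent compatibly with the Mackey functor transfers and restrictions, uniformly across primes $p \neq q$. In particular, a naive application of the $\psi^\ell - 1$ fiber sequence with $\ell$ primitive modulo $|G|$ produces a cokernel of the form $\Z_p^\wedge/(\ell^{d\varphi(q^k)}-1)$, which differs from the non-equivariant $\Z_p^\wedge/(\ell^d - 1)$ by a shift of $\nu_p(\varphi(q^k))$ in $p$-adic valuation; one must therefore choose the Adams operations and the descent compatibly so that the matching at each cyclic subgroup reproduces exactly $\pi_* L_{\KU/p}\S$. The case $p = 2$ requires additional care, as $\Z_2^\times$ is not procyclic and the standard single-Adams-operation fiber sequence must be replaced by a two-term variant. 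Once the matching is established at every $V_H$, the isomorphism of Green functors is automatic from the equivalence \cref{PLocalMackeySplits}.
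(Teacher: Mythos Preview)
The paper does not give its own proof of this proposition; it is quoted directly as \cite[Proposition~8.5]{BGS} and used as a black box, so there is no in-paper argument to compare against.

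Your proposal has a real gap, and you identify it yourself. The non-cyclic case is nearly right, but the word ``hence'' hides an unjustified step: vanishing of $V_H$ on $\RU_p^\wedge$ does not by itself force vanishing of $V_H$ on $\upi_* L_{\KUG/p}\S_G$, since the latter is not an $\RU$-module. The correct mechanism is at the spectrum level: the primitive idempotent $e_H\in A(G)_p^\wedge$ annihilates $(\KUG)_p^\wedge$ (because it kills its homotopy), so $e_H\cdot L_{\KUG/p}\S_G$ is a Bousfield localization at the zero spectrum and hence vanishes. You have gestured toward this but not said it.

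More seriously, the cyclic case---the heart of the proposition---is left open. You correctly observe that the fiber sequence of \cref{fiberseq} is specific to the prime $q$, and that naively running $\psi^\ell-1$ with $\ell$ primitive modulo $|G|$ at a prime $p\neq q$ yields cokernels shifted by $\nu_p(\varphi(q^k))$, hence the wrong answer. Your proposed remedy---an unspecified ``Galois/Morita descent'' plus an adjusted Adams-operation sequence, with a separate treatment at $p=2$---is a list of hoped-for ingredients rather than an argument. To finish, one needs at minimum to identify the $e_H$-summand of $(\KUG)_p^\wedge$ for cyclic $H$ as copies of $\KU_p^\wedge$ carrying a $W_G(H)$-action, and then to show that the induced Bousfield localization on $e_H\cdot(\S_G)_p^\wedge$ agrees with the non-equivariant $L_{\KU/p}\S$; neither identification is carried out in your sketch.
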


The above is a complete description of the $p$-complete homotopy Mackey functors of $L_{\KUG} \S_G$, but 
\cref{proposition:qnotplocal} then gives the following description in the case $n=-1$:

\begin{corollary}
\label{cor:piminus1pnotq}
For $p\neq q$, we have
$ 
	\upi_{-1} L_{\KUG/p} \S_G 
		\cong 
	\uRQ \otimes \Z_p^\wedge 
		\cong \cokpsi^\wedge_p.
$
\end{corollary}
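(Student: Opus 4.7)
The plan is to chain together two results already established in the paper. First, I would apply \cref{prop:htpyLKUGmodp} at $n=-1$, obtaining
\[
	\upi_{-1} L_{\KUG/p} \S_G \cong \uRQ \otimes \pi_{-1} L_{\KU/p}\S.
\]
The formulas for $\pi_* L_{\KU/p}\S$ recalled at the start of \cref{sec:localpnotq} give $\pi_{-1} L_{\KU/p}\S \cong \Z_p^\wedge$ for every prime $p$, in both the odd and $p=2$ cases. Combining these produces the first isomorphism $\upi_{-1} L_{\KUG/p} \S_G \cong \uRQ \otimes \Z_p^\wedge$.

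For the second isomorphism, I would invoke \cref{proposition:qnotplocal}, which under the standing hypothesis $p \neq q$ identifies $\cokpsi^\wedge_p \cong (\uRQ)^\wedge_p$. Since $\uRQ$ takes levelwise values in finitely generated free abelian groups (the rational representation rings), $p$-completion agrees with tensoring over $\Z$ with $\Z_p^\wedge$, so $(\uRQ)^\wedge_p \cong \uRQ \otimes \Z_p^\wedge$. Composing with the previous isomorphism yields the claim.

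There is no substantive obstacle: the statement is essentially bookkeeping that glues the equivariant-to-nonequivariant decomposition of \cref{prop:htpyLKUGmodp} to the algebraic identification of $\cokpsi^\wedge_p$ supplied by \cref{proposition:qnotplocal}. The only mild care required is to note that the two isomorphisms are natural enough that the composite literally identifies the Mackey functor $\upi_{-1}L_{\KUG/p}\S_G$ with $\cokpsi^\wedge_p$, which one can confirm either directly by inspecting the descriptions in \cref{sec:localpnotq} or, alternatively, by re-running the long-exact-sequence argument of \cref{sec:localpisq} after $p$-completion to express $\upi_{-1} L_{\KUG/p} \S_G$ as a cokernel of $\psi^\ell - 1$ on $(\KU_G)^\wedge_p$ in degree zero.
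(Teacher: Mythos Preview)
Your proposal is correct and matches the paper's approach: the corollary is stated immediately after \cref{prop:htpyLKUGmodp} with only the remark that \cref{proposition:qnotplocal} supplies the second identification, and your argument spells out exactly this chain of isomorphisms together with the observation that $(\uRQ)^\wedge_p \cong \uRQ \otimes \Z_p^\wedge$.
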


\subsection{Local to global reassembly}
\label{sec:Reassembly}

Here we use the work of \cref{sec:localpisq} and \cref{sec:localpnotq}, in combination with the long exact sequence \eqref{eq:LES}, to deduce the homotopy Mackey functors  $\upi_n L_\KUG \S_G$.
The case $n=0$ was the focus of \cite{BGS}. 
The cases $n=-1$ and $n=-2$ behave quite differently from the rest, so we begin by considering the cases  of $n$ different from $0$, $-1$, or $-2$.

\begin{proposition} 
\label{upi2k}
Let $n=2k$ be different from $0$ and $-2$. Then 
\[\upi_{2k} L_\KUG \S_G \cong 
\uRQ \otimes \pi_{2k}L_{KU}\S \cong \uRQ \otimes \Z/2\]
for  $2k\equiv 0,2 \pmod8$. This Mackey functor vanishes otherwise.
\end{proposition}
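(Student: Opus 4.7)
The plan is to feed the local calculations of \cref{sec:localpisq,sec:localpnotq} into the long exact sequence \eqref{eq:LES}. Writing $P = \prod_p L_{\KUG/p}\S_G$, the goal is to show that for $n = 2k$ with $n \neq 0, -2$, the sequence collapses to an isomorphism
\[
\upi_{2k} L_{\KUG}\S_G \;\cong\; \upi_{2k} P \;\cong\; \uRQ \otimes \pi_{2k} L_{KU}\S.
\]

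First, I would identify the pieces of \eqref{eq:LES} in degree $2k$. Since $\Q \otimes L_{\KUG}\S_G$ is the Eilenberg--Mac Lane spectrum for $\Q \otimes \uRQ$, the summand $\upi_{2k}(\Q \otimes L_{\KUG}\S_G)$ vanishes when $k \neq 0$. For the local part, \cref{cor:pi2dvanishes} gives $\upi_{2k} L_{\KUG/q}\S_G = 0$; \cref{prop:htpyLKUGmodp} combined with the formulas for $\pi_{2k} L_{KU/p}\S$ in \cref{sec:localpnotq} gives vanishing for all odd primes $p \neq q$; and at $p = 2$ the same proposition gives $\uRQ \otimes \Z/2$ precisely when $2k \equiv 0, 2 \pmod 8$. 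Assembled, $\upi_{2k} P \cong \uRQ \otimes \pi_{2k} L_{KU}\S$.

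The main obstacle is to verify that the adjacent terms $\Q \otimes \upi_{2k} P$ and $\Q \otimes \upi_{2k+1} P$ both vanish, so that \eqref{eq:LES} degenerates as claimed. For this, it suffices to show that $\upi_n P = \prod_p \upi_n L_{\KUG/p}\S_G$ is torsion for $n \in \{2k, 2k+1\}$. The hypothesis $n \neq 0, -2$ ensures that $n$ and $n+1$ avoid the degrees $0$ and $-1$, where torsion-free contributions like $\Z_p^\wedge$ would appear. With this exclusion each factor $\upi_n L_{\KUG/p}\S_G$ is finite: for $p = q$ by \cref{cokerpsiell} in odd degree and trivially in even degree, and for $p \neq q$ by the explicit formulas in \cref{sec:localpnotq}. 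Moreover, these formulas are nonzero for only finitely many $p$: in even degree $\neq 0$ only $p = 2$ contributes, and in odd degree $2m-1$ the odd-prime contributions require $(p-1) \mid m$. Thus $\upi_n P$ is a finite direct sum of finite Mackey functors, and its rationalization vanishes.

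Substituting these vanishings into \eqref{eq:LES} gives the isomorphism $\upi_{2k} L_{\KUG}\S_G \cong \upi_{2k} P \cong \uRQ \otimes \pi_{2k} L_{KU}\S$, which equals $\uRQ \otimes \Z/2$ when $2k \equiv 0, 2 \pmod 8$ and vanishes otherwise.
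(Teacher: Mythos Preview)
Your argument is correct and follows the same route as the paper: you feed the local vanishing from \cref{cor:pi2dvanishes} and \cref{prop:htpyLKUGmodp} into the long exact sequence \eqref{eq:LES}, observe that only the $p=2$ factor survives in degree $2k$, and use finiteness of the neighboring odd-degree terms to kill the rational contributions. If anything, you are slightly more explicit than the paper in checking that \emph{both} $\Q\otimes\upi_{2k}P$ and $\Q\otimes\upi_{2k+1}P$ vanish, whereas the paper only spells out the latter and leaves the former implicit in ``the result now follows.''
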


\begin{proof}
    Fix $2k$ different from $0$ and $-2$.
    By \cref{cor:pi2dvanishes} and \cref{prop:htpyLKUGmodp}, we have that for 
$p$ any odd prime (including $p=q$), then $\upi_{2k} \left( L_{\KUG/p} \S_G \right)$ vanishes. In the case of $p=2$, we have
\[ 
	\upi_{2k} \left( L_{\KUG/2} \S_G \right) \cong 
	\begin{cases} 
		\uRQ \otimes \Z/2 & 2k \equiv 0,2 \pmod8, \\
		0 & \text{else}.
	\end{cases}
\]
Similarly, we find that $\upi_{2k+1} L_{\KUG/p} \S_G$ is nonzero (and levelwise finite) only for finitely many primes $p$. It follows that $ \Q \otimes \prod_p \upi_{2k+1} L_{\KUG/p}\S_G$ vanishes. The result now follows from \eqref{eq:LES}.
\end{proof}

In the case of $n$ odd and different from $-1$, the answer is stated in terms of the cokernel of $\psi^\ell-1$, where as usual $\ell$ is primitive modulo the order of $G$.

\begin{proposition}
\label{upi2kminus1}
Let $2k-1 \neq -1$. Then 
\[
	\upi_{2k-1} L_\KUG \S_G \cong \uRQ \otimes \pi_{2k-1} L_{KU}\S\big[ \tfrac1q\big] \oplus \cokpsi^\wedge_q\{k\}.
\]
\end{proposition}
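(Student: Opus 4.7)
The approach is to apply the long exact sequence \eqref{eq:LES} associated to the fracture square \eqref{fracturesquare} at $n = 2k-1$, using the local computations of \cref{sec:localpisq,sec:localpnotq} to identify each term. First, the rational contribution $\upi_{2k-1}(\Q\otimes L_\KUG\S_G)$ vanishes: since $\Q \otimes L_\KUG\S_G$ is the Eilenberg--Mac Lane spectrum of $\Q \otimes \uRQ$, its homotopy is concentrated in degree $0$, and $2k-1 \neq -1$ forces $k \neq 0$, so in particular $2k - 1 \neq 0$.

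Next, I will argue that the two neighboring terms $\Q \otimes \prod_p \upi_{2k} L_{\KUG/p}\S_G$ and $\Q \otimes \prod_p \upi_{2k-1} L_{\KUG/p}\S_G$ both vanish, so that the LES collapses to an isomorphism. For $p = q$, \cref{cor:pi2dvanishes} gives $\upi_{2k}L_{\KUG/q}\S_G = 0$ (since $k \neq 0$), while \cref{cor:pi2dmin1coker} combined with \cref{cokerpsiell} identifies $\upi_{2k-1} L_{\KUG/q}\S_G \cong \cokpsi^\wedge_q\{k\}$, which is levelwise a finite direct sum of finite cyclic $q$-groups. For $p \neq q$, \cref{prop:htpyLKUGmodp} together with the Adams--Baird--Ravenel formulas recalled at the start of \cref{sec:localpnotq} shows that each $\upi_j L_{\KUG/p}\S_G$ (for $j \in \{2k, 2k-1\}$) is levelwise finite $p$-torsion, and nonzero for only finitely many primes $p$ once $k \neq 0$. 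Hence the two indexed products are levelwise finite torsion abelian groups, and tensoring with $\Q$ annihilates them.

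Consequently, the LES collapses to a short exact sequence with vanishing outer terms, giving
\[
\upi_{2k-1} L_\KUG\S_G \;\cong\; \prod_p \upi_{2k-1} L_{\KUG/p}\S_G,
\]
and this product is in fact a finite direct sum. Splitting off the $p=q$ summand as $\cokpsi^\wedge_q\{k\}$ and applying \cref{prop:htpyLKUGmodp} to the remaining summands yields
\[
\upi_{2k-1} L_\KUG\S_G \;\cong\; \uRQ \otimes \bigoplus_{p \neq q} \pi_{2k-1} L_{\KU/p}\S \;\oplus\; \cokpsi^\wedge_q\{k\}.
\]
To finish, I will identify $\bigoplus_{p \neq q} \pi_{2k-1} L_{\KU/p}\S$ with $\pi_{2k-1} L_{\KU}\S[\tfrac{1}{q}]$: because $\pi_{2k-1} L_{\KU}\S$ is a finite torsion abelian group for $2k-1 \neq -1$, it decomposes into its $p$-primary parts $\pi_{2k-1}L_{\KU/p}\S$, and inverting $q$ discards precisely the $p = q$ summand.

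The main obstacle is the bookkeeping in the second step --- confirming, across the cases (odd $p \neq q$, $p = 2 \neq q$, and $p=q$), that all products appearing in the LES are levelwise finite torsion and supported on finitely many primes, so that rationalization annihilates them. All the data required is already tabulated in \cref{cor:pi2dvanishes,prop:htpyLKUGmodp,cokerpsiell}, so no new computation is needed; the argument is essentially an assembly of those results along the fracture square.
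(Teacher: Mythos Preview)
Your proof is correct and follows essentially the same approach as the paper: use the long exact sequence \eqref{eq:LES}, invoke \cref{cor:pi2dvanishes}, \cref{cor:pi2dmin1coker}, \cref{cokerpsiell}, and \cref{prop:htpyLKUGmodp} to see that all relevant local homotopy Mackey functors in degrees $2k$ and $2k-1$ are levelwise finite, so that the rational terms in the sequence vanish and the sequence collapses. The paper's proof is simply a terser version of yours; you have additionally spelled out the final identification $\bigoplus_{p\neq q}\pi_{2k-1}L_{\KU/p}\S \cong \pi_{2k-1}L_{\KU}\S[\tfrac1q]$, which the paper leaves implicit.
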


\begin{proof}
According to \cref{sec:localpnotq}, the homotopy Mackey functors of $L_{\KUG/p}\S_G$ are levelwise finite in degrees $2k$ and $2k-1$. 
\cref{cor:pi2dvanishes} gives that $\upi_{2k}L_{\KUG/q}\S_G$ vanishes, while \cref{cor:pi2dmin1coker} identifies $\upi_{2k-1}L_{\KUG/q}\S_G$ with $\cokpsi^\wedge_q\{d\}$. By \cref{cokerpsiell}, this is levelwise finite.
\end{proof}

We now turn our attention to the case $n=-1$. 

\begin{proposition} 
 $\upi_{-1} L_\KUG \S_G=0$.
\end{proposition}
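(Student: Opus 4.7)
The plan is to extract the vanishing from the long exact sequence \eqref{eq:LES} of the arithmetic fracture square. Since $\Q \otimes L_\KUG \S_G$ is the Eilenberg--Mac~Lane spectrum for $\Q \otimes \uRQ$ and is therefore concentrated in degree zero, we have $\upi_{-1}(\Q \otimes L_\KUG \S_G) = 0$, and the relevant segment of \eqref{eq:LES} becomes
\begin{equation*}
(\Q \otimes \uRQ) \oplus \prod_p \upi_0 L_{\KUG/p}\S_G \xrightarrow{\alpha} \Q \otimes \prod_p \upi_0 L_{\KUG/p}\S_G \to \upi_{-1} L_\KUG \S_G \to \prod_p \upi_{-1} L_{\KUG/p}\S_G \xrightarrow{\gamma} \Q \otimes \prod_p \upi_{-1} L_{\KUG/p}\S_G.
\end{equation*}
It will suffice to show that $\alpha$ is surjective and $\gamma$ is injective.

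The next step is to collect the local inputs. For every prime $p$, both $\upi_0 L_{\KUG/p}\S_G$ and $\upi_{-1} L_{\KUG/p}\S_G$ are isomorphic, levelwise at $G/H$, to $M^\wedge_p$ for $M := \uRQ(G/H)$. For $p \ne q$, both identifications follow from \cref{prop:htpyLKUGmodp} (and the $\upi_{-1}$ case is precisely \cref{cor:piminus1pnotq}); for $p = q$, the $\upi_0$ case is \cite[Proposition~6.8]{BGS}, and the $\upi_{-1}$ case follows by combining \cref{cor:pi2dmin1coker} at $d = 0$ with \cref{cokerpsiell}(b). After these identifications, the displayed sequence reduces, level by level at $G/H$, to a diagram of abelian groups in which $\alpha$ and $\gamma$ are the standard maps appearing in the classical arithmetic fracture square for the finitely generated free abelian group $M$.

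With the local input pinned down, the argument is classical. The map $\gamma$ is the rationalization of the torsion-free group $\prod_p M^\wedge_p$ and is therefore injective. The map $\alpha$ is exactly the surjection in the short exact sequence $0 \to M \to (\Q \otimes M) \oplus \prod_p M^\wedge_p \to \Q \otimes \prod_p M^\wedge_p \to 0$, whose surjectivity at the right term is the classical fact that $\mathbb{A}_f = \Q + \hat{\Z}$ (applied componentwise). Together these force $\upi_{-1} L_\KUG \S_G(G/H) = 0$ for every $H \leq G$. The one delicate point is to identify the connecting maps of \eqref{eq:LES} with the standard maps of the classical fracture square --- once this bookkeeping is carried out, the vanishing follows formally.
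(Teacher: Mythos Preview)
Your argument is essentially the same as the paper's: both use the long exact sequence \eqref{eq:LES}, both observe that the $\upi_{-1}$ terms are levelwise torsion-free so that the rationalization map $\gamma$ is injective, and both reduce the surjectivity of $\alpha$ to the classical statement $\mathbb{A}_f=\Q+\hat{\Z}$ for a finite-rank free abelian group (the paper records this as \cref{lem:piminus1}).

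There is one small inaccuracy to fix: it is not true that $\upi_0 L_{\KUG/p}\S_G$ is levelwise $M^\wedge_p$ for \emph{every} prime $p$. At $p=2$ one has $\pi_0 L_{\KU/2}\S \cong \Z_2^\wedge \oplus \Z/2$, so by \cref{prop:htpyLKUGmodp} the Mackey functor $\upi_0 L_{\KUG/2}\S_G$ is $\uRQ\otimes(\Z_2^\wedge\oplus\Z/2)$, with an extra $\Z/2$ summand (and indeed the paper carries this summand along explicitly). This does not damage your proof, since that torsion summand dies under rationalization and therefore contributes nothing to the target of $\alpha$; but you should not claim that the sequence literally agrees with the classical arithmetic square for $M$ before making this observation.
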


\begin{proof}
By \cref{cokerpsiell}{(b)} and \cref{cor:pi2dmin1coker}, the Mackey functor $\upi_{-1}L_{\KUG/q} \S_G$ is torsion-free. The same is true of $\upi_{-1} L_{\KUG/p}\S_G$ for $p\neq q$ by \cref{sec:localpnotq}. It follows that the map
\[
	\prod\limits_p \upi_{-1} L_{\KUG/p} \S_G \longrightarrow \Q\otimes \left( \prod\limits_p \upi_{-1} L_{\KUG/p} \S_G \right)
\]
is injective. 
The long exact sequence \eqref{eq:LES} then shows that $\upi_{-1}L_\KUG \S_G$ is the cokernel of 
\[
	\Q\otimes \uRQ \times \prod\limits_p \upi_0 L_{\KUG/p} \S_G \longrightarrow \Q\otimes \left( \prod\limits_p \upi_0 L_{\KUG/p} \S_G \right), 
\]
which may be rewritten as
\[
	\Q\otimes \uRQ \oplus  \Z/2 \otimes \uRQ \oplus \prod\limits_p (\uRQ)^\wedge_p \longrightarrow \Q\otimes \left( \prod\limits_p (\uRQ)^\wedge_p \right).
\]
It suffices to show that this is levelwise surjective. As the values of the Mackey functor $\uRQ$ are all free abelian groups of finite rank, the result follows from \cref{lem:piminus1}.
\end{proof}

\begin{lemma}
\label{lem:piminus1}
Let $B$ be a free abelian group of finite rank. Then the map
\[
	f\colon (\Q \otimes B) \oplus \prod\limits_p B^\wedge_p \longrightarrow \Q \otimes \left( \prod\limits_p B^\wedge_p \right) 
\]
defined by
\[
	f\left( \frac{b_0}{n}, (b_p) \right) = \frac1n (b_0-nb_p)
\]
is surjective. 
\end{lemma}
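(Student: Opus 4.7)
The plan is to reduce to the rank one case $B = \Z$ by choosing a $\Z$-basis of $B$, since $f$ then splits as a direct sum of copies of the analogous map for $\Z$. After that, the argument is a direct application of the Chinese Remainder Theorem.

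In the rank one case, I would start by writing an arbitrary element of $\Q \otimes \prod_p \Z_p^\wedge$ in concrete form. Since $\Q \otimes -$ is the filtered colimit of the localizations $(-)[\tfrac{1}{n}]$ over the positive integers ordered by divisibility, any such element can be written as $\tfrac{1}{n}(a_p)_p$ for some $n \in \Z_{>0}$ and $(a_p)_p \in \prod_p \Z_p^\wedge$. To realize this as $f(a/n, (c_p)_p)$, I need to find $a \in \Z$ such that $c_p := (a - a_p)/n$ lies in $\Z_p^\wedge$ for every prime $p$. This condition is automatic whenever $p \nmid n$, since then $n$ is a unit in $\Z_p^\wedge$; and when $p \mid n$ it reduces to the single congruence $a \equiv a_p \pmod{p^{\nu_p(n)}}$. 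Only finitely many primes divide $n$, so the Chinese Remainder Theorem produces the required integer $a$, and the resulting tuple $(c_p)_p$ lies in $\prod_p \Z_p^\wedge$ as needed.

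There is no real obstacle in the argument. The only mildly subtle point is that $\Q \otimes -$ does not commute with infinite products, so one must be careful about what a general element of $\Q \otimes \prod_p \Z_p^\wedge$ actually looks like. Presenting $\Q \otimes -$ as a filtered colimit of single-integer localizations bypasses this issue and reduces the statement to elementary number theory.
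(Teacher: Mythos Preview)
Your argument is correct: the reduction to $B=\Z$ via a basis, the description of a general element of $\Q\otimes\prod_p \Z_p^\wedge$ as $\tfrac1n(a_p)_p$, and the use of the Chinese Remainder Theorem to find $a\in\Z$ with $a\equiv a_p\pmod{p^{\nu_p(n)}}$ for the finitely many primes dividing $n$ all go through as stated. The paper itself leaves this proof to the reader, so there is no argument to compare against; your approach is exactly the kind of direct verification the authors presumably had in mind.
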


\begin{proof}
Left to the reader.
\end{proof}

Finally, we deal with the case $n=-2$.

\begin{proposition}
\label{prop:piminus2}
$ \upi_{-2}L_\KUG \S_G \cong \Q/\Z \otimes \cokpsi$.
\end{proposition}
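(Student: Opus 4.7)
The plan is to specialize the long exact sequence \eqref{eq:LES} at $n=-2$ and identify the resulting cokernel via the classical short exact sequence $0 \to \widehat{\Z} \to \Q \otimes \widehat{\Z} \to \Q/\Z \to 0$ of abelian groups, where $\widehat{\Z}=\prod_p\Z_p^\wedge$ is the profinite integers.

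First, I would establish the vanishing of the neighboring terms in \eqref{eq:LES}. The rational piece $\upi_{-2}(\Q\otimes L_\KUG\S_G)$ vanishes because $\Q\otimes L_\KUG\S_G$ is Eilenberg--Mac~Lane, concentrated in degree zero. For each prime $p$, the local piece $\upi_{-2}L_{\KUG/p}\S_G$ also vanishes: the case $p=q$ is \cref{cor:pi2dvanishes}, while for $p\neq q$ \cref{prop:htpyLKUGmodp} reduces to $\uRQ\otimes\pi_{-2}L_\KU\S$, and inspection of the classical homotopy groups gives $\pi_{-2}L_{\KU/p}\S=0$ in all cases (for odd $p$, $-2$ is even and so not of the form $2k-1$; for $p=2$, $-2\equiv 6 \pmod 8$). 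Together with the vanishing $\upi_{-1}L_\KUG\S_G=0$ proved in the previous proposition, the long exact sequence collapses to the four-term exact sequence
\[
    0 \longrightarrow \prod_p \upi_{-1}L_{\KUG/p}\S_G \longrightarrow \Q \otimes \prod_p \upi_{-1}L_{\KUG/p}\S_G \longrightarrow \upi_{-2}L_\KUG\S_G \longrightarrow 0,
\]
with the first arrow being the canonical rationalization.

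Next, I would identify each local factor using \cref{cor:pi2dmin1coker} at $p=q$ and \cref{cor:piminus1pnotq} at $p\neq q$, which give $\upi_{-1}L_{\KUG/p}\S_G\cong\cokpsi_p^\wedge$ for every prime $p$. The remark following \cref{cokerpsiell} shows that $\cokpsi$ is levelwise free abelian of finite rank, indexed by conjugacy classes of cyclic subgroups. Flatness and finite generation then give a levelwise identification $\cokpsi_p^\wedge\cong \cokpsi\otimes_\Z\Z_p^\wedge$ of Mackey functors, and the same finite generation allows the infinite product to commute with the tensor, yielding natural isomorphisms
\[
    \prod_p\cokpsi_p^\wedge \cong \cokpsi\otimes\widehat{\Z}, \qquad \Q\otimes\prod_p\cokpsi_p^\wedge \cong \cokpsi\otimes(\Q\otimes\widehat{\Z}),
\]
under which the rationalization map is $\cokpsi$ tensored with the inclusion $\widehat{\Z}\hookrightarrow\Q\otimes\widehat{\Z}$.

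Finally, since $\cokpsi$ is levelwise $\Z$-flat (indeed free), tensoring it with the classical short exact sequence $0\to\widehat{\Z}\to\Q\otimes\widehat{\Z}\to\Q/\Z\to 0$ preserves exactness and identifies the cokernel of the rationalization map as $\cokpsi\otimes\Q/\Z$, which is the claimed answer. The only nontrivial bookkeeping is commuting the infinite product $\prod_p$ past the levelwise tensor with $\cokpsi$, which is routine given the levelwise finite generation guaranteed by \cref{cokerpsiell}.
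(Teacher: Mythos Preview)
Your argument is correct and follows essentially the same route as the paper: both reduce via the long exact sequence \eqref{eq:LES} to identifying $\upi_{-2}L_\KUG\S_G$ as the cokernel of the rationalization map on $\prod_p\cokpsi^\wedge_p$, using the same local inputs (\cref{cor:pi2dvanishes}, \cref{cor:pi2dmin1coker}, \cref{cor:piminus1pnotq}). The only difference is in the final algebraic simplification: the paper tensors with $\Q/\Z$ directly and then invokes a general lemma that $\Q/\Z\otimes\prod_p A_p\cong\bigoplus_p(\Q_p/\Z_p\otimes A_p)$ for $p$-local $A_p$, whereas you first pull $\cokpsi$ out of the product using its levelwise finite freeness and then tensor with the sequence $0\to\widehat{\Z}\to\Q\otimes\widehat{\Z}\to\Q/\Z\to 0$; your version is slightly more hands-on but avoids stating a separate lemma, and both are equally valid. (Your use of $\upi_{-1}L_\KUG\S_G=0$ to upgrade to a short exact sequence is correct but unnecessary, since only the cokernel is needed.)
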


\begin{proof}
By \cref{cor:pi2dvanishes} and \cref{sec:localpnotq}, the Mackey functors $\upi_{-2}L_{\KUG/p}\S_G$ vanish for all primes $p$. It follows from the long exact sequence \eqref{eq:LES} that $\upi_{-2}L_\KUG \S_G$ is the cokernel of the rationalization map
\[
	\prod\limits_p \upi_{-1} L_{\KUG/p} \S_G \longrightarrow \Q\otimes \left( \prod\limits_p \upi_{-1} L_{\KUG/p} \S_G \right).
\]
In other words, we have that 
\[
	\upi_{-2}L_\KUG \S_G \cong \Q/\Z \otimes \left( \prod\limits_p \upi_{-1} L_{\KUG/p} \S_G \right).
\]
By \cref{cor:pi2dmin1coker} and \cref{cor:piminus1pnotq}, this may be rewritten as 
\[
	\upi_{-2}L_\KUG \S_G \cong \Q/\Z \otimes \left( \prod\limits_p \cokpsi^\wedge_p \right).
\]
Each Mackey functor $\cokpsi^\wedge_p$ is (levelwise) $p$-local, so that according to \cref{lemma:producttosumtorsion} we have an isomorphism 
\[\begin{split}
	\Q/\Z \otimes \left( \prod\limits_p \cokpsi^\wedge_p \right) &\cong
	\bigoplus_p \left( \Q_p/\Z_p \otimes \cokpsi^\wedge_p \right)  \cong
	\bigoplus_p \left( \Q_p/\Z_p \otimes \cokpsi \right) \\
	& \cong \left( \bigoplus_p \Q_p/\Z_p \right) \otimes \cokpsi \cong \Q/\Z \otimes \cokpsi.
\end{split}\]
\end{proof}

\begin{lemma}
\label{lemma:producttosumtorsion}
	Suppose for each prime $p$, $A_p$ is an abelian group such that all primes different than $p$ act invertibly on $A_p$. Then 
	\[ 
		\Q/\Z  \otimes \left(\prod_p A_p \right) 
			\cong 
		\bigoplus_p \left(\Q_p/ \Z_p \otimes  A_p\right). 
	\]
\end{lemma}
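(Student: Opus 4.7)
The plan is to decompose $\Q/\Z$ into its $p$-primary Pr\"ufer components, reducing to a statement for each prime $p$ separately, and then invoke the invertibility hypothesis to see that tensoring with $\Q_p/\Z_p$ extracts only the $p$-th factor of the product.

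The first step is to use the canonical primary decomposition $\Q/\Z \cong \bigoplus_p \Q_p/\Z_p$ and distribute tensor product over this direct sum. This reduces the left-hand side to $\bigoplus_p \bigl(\Q_p/\Z_p \otimes \prod_q A_q\bigr)$, so it suffices to verify that for each fixed prime $p$ there is a natural isomorphism
\[
    \Q_p/\Z_p \otimes \prod_q A_q \;\cong\; \Q_p/\Z_p \otimes A_p.
\]

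For this, I would write $\Q_p/\Z_p \cong \colim_n \Z/p^n$ as a filtered colimit and use that tensor product commutes with colimits to reduce further to showing $\Z/p^n \otimes \prod_q A_q \cong \Z/p^n \otimes A_p$, naturally in $n$. Since $\Z/p^n \otimes (-) \cong (-)/p^n$, this is the claim that $(\prod_q A_q)/p^n \cong A_p/p^n$. By hypothesis, multiplication by $p^n$ is an automorphism of $A_q$ for every $q \neq p$; acting coordinatewise, this gives $p^n \bigl(\prod_q A_q\bigr) = \bigl(\prod_{q\neq p} A_q\bigr) \times p^n A_p$, so quotienting collapses every factor except the $p$-th and leaves $A_p/p^n$. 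Passing to the colimit in $n$ and then reassembling the direct sum over $p$ completes the argument.

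The only non-formal point is the middle step: the interchange of quotient and product. This is precisely where the hypothesis is used, since in general $(\prod_q A_q)/p^n \neq \prod_q (A_q/p^n)$. The invertibility of $p^n$ on each $A_q$ with $q \neq p$ is exactly what makes the image of multiplication by $p^n$ a product (rather than merely sitting inside one), and hence makes the quotient collapse to $A_p/p^n$. Everything else is formal manipulation with colimits and direct sums.
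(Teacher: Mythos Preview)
Your proof is correct and follows essentially the same approach as the paper: decompose $\Q/\Z \cong \bigoplus_p \Q_p/\Z_p$, write each $\Q_p/\Z_p$ as $\colim_n \Z/p^n$, and use that tensor product commutes with colimits. The paper's proof is a one-line sketch of exactly this; you have simply filled in the details, including the explicit verification that $(\prod_q A_q)/p^n \cong A_p/p^n$ using the invertibility hypothesis.
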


\begin{proof}
This follows from the decomposition of $\Q/\Z$ as $\bigoplus_r \Q_r/\Z_r$ as $r$ runs over primes, the expression of $\Q_r/\Z_r$ as $\colim_k \Z/r^k$, and the fact that tensor product commutes with colimits.
\end{proof}

\begin{bibdiv}
\begin{biblist}

\bib{AB}{incollection}{
	label = {Ad}
	title = {Operations of the $n$-th kind in $K$-theory, and what we don't know about $\mathbb{RP}^\infty$}
	booktitle = {New Developments in Topology},
	author = {Adams, J. F.},
	editor = {Segal, Graeme},
	year = {1974},
	pages = {1-10},
	publisher = {Cambridge University Press},
	DOI={10.1017/CBO9780511662607.002}
}

\bib{AtKthy}{book}{
   author={Atiyah, M. F.},
   title={$K$-theory},
   series={Advanced Book Classics},
   edition={2},
   note={Notes by D. W. Anderson},
   publisher={Addison-Wesley Publishing Company, Advanced Book Program,
   Redwood City, CA},
   date={1989},
   pages={xx+216},
   isbn={0-201-09394-4},
   review={\MR{1043170}},
}

\bib{BGS}{article}{
  author={Bonventre, Peter J.},
  author={Guillou, Bertrand J.},
  author={Stapleton, Nathaniel J.},
  title={On the $KU_G$-local equivariant sphere},
  eprint={https://arxiv.org/abs/2204.03797},
}

\bib{Bousfield}{article}{
    label = {Bo},
   author={Bousfield, A. K.},
   title={The localization of spectra with respect to homology},
   journal={Topology},
   volume={18},
   date={1979},
   number={4},
   pages={257--281},
   issn={0040-9383},
   review={\MR{551009}},
   doi={10.1016/0040-9383(79)90018-1},
}

\bib{TMF}{collection}{
   title={Topological modular forms},
   series={Mathematical Surveys and Monographs},
   volume={201},
   editor={Douglas, Christopher L.},
   editor={Francis, John},
   editor={Henriques, Andr\'{e} G.},
   editor={Hill, Michael A.},
   publisher={American Mathematical Society, Providence, RI},
   date={2014},
   pages={xxxii+318},
   isbn={978-1-4704-1884-7},
   review={\MR{3223024}},
   doi={10.1090/surv/201},
}

\bib{GrMa}{article}{
   author={Greenlees, J. P. C.},
   author={May, J. P.},
   title={Generalized Tate cohomology},
   journal={Mem. Amer. Math. Soc.},
   volume={113},
   date={1995},
   number={543},
   pages={viii+178},
   issn={0065-9266},
   review={\MR{1230773}},
   doi={10.1090/memo/0543},
}

\bib{HiKo}{article}{
   author={Hirata, Koichi},
   author={Kono, Akira},
   label={HiKo},
   title={On the Bott cannibalistic classes},
   journal={Publ. Res. Inst. Math. Sci.},
   volume={18},
   date={1982},
   number={3},
   pages={1187--1191},
   issn={0034-5318},
   review={\MR{688953}},
   doi={10.2977/prims/1195183304},
}

\bib{Narkiewicz}{book}{
   author={Narkiewicz, W\l adys\l aw},
   title={The development of prime number theory},
   series={Springer Monographs in Mathematics},
   note={From Euclid to Hardy and Littlewood},
   publisher={Springer-Verlag, Berlin},
   date={2000},
   pages={xii+448},
   isbn={3-540-66289-8},
   review={\MR{1756780}},
   doi={10.1007/978-3-662-13157-2},
}

\bib{Rav}{article}{
   label = {Ra},
   author={Ravenel, Douglas C.},
   title={Localization with respect to certain periodic homology theories},
   journal={Amer. J. Math.},
   volume={106},
   date={1984},
   number={2},
   pages={351--414},
   issn={0002-9327},
   review={\MR{737778}},
   doi={10.2307/2374308},
}

\bib{Ritter}{article}{
   label = {Ri},
   author={Ritter, J\"{u}rgen},
   title={Ein Induktionssatz f\"{u}r rationale Charaktere von nilpotenten
   Gruppen},
   language={German},
   journal={J. Reine Angew. Math.},
   volume={254},
   date={1972},
   pages={133--151},
   issn={0075-4102},
   review={\MR{470058}},
   doi={10.1515/crll.1972.254.133},
}

\bib{Seg-perm}{article}{
  label = {Seg2},
  author={Segal, Graeme},
  title={Permutation representations of finite $p$-groups},
  journal={Quart. J. Math. Oxford Ser. (2)},
  volume={23},
  date={1972},
  pages={375--381},
  issn={0033-5606},
  review={\MR{322041}},
  doi={10.1093/qmath/23.4.375},
}

\bib{Serre}{book}{
   label = {Ser},
   author={Serre, Jean-Pierre},
   title={Linear representations of finite groups},
   series={Graduate Texts in Mathematics, Vol. 42},
   note={Translated from the second French edition by Leonard L. Scott},
   publisher={Springer-Verlag, New York-Heidelberg},
   date={1977},
   pages={x+170},
   isbn={0-387-90190-6},
   review={\MR{0450380}},
}

\bib{TW95}{article}{
   author={Th\'{e}venaz, Jacques},
   author={Webb, Peter},
   title={The structure of Mackey functors},
   journal={Trans. Amer. Math. Soc.},
   volume={347},
   date={1995},
   number={6},
   pages={1865--1961},
   issn={0002-9947},
   review={\MR{1261590}},
   doi={10.2307/2154915},
}

\bib{Thev}{article}{
   author={Th\'{e}venaz, Jacques},
   title={Some remarks on $G$-functors and the Brauer morphism},
   journal={J. Reine Angew. Math.},
   volume={384},
   date={1988},
   pages={24--56},
   issn={0075-4102},
   review={\MR{929977}},
   doi={10.1515/crll.1988.384.24},
}

\bib{zhang}{article}{
    AUTHOR = {Zhang, Ningchuan},
     TITLE = {Analogs of {D}irichlet {$L$}-functions in chromatic homotopy
              theory},
   JOURNAL = {Adv. Math.},
  FJOURNAL = {Advances in Mathematics},
    VOLUME = {399},
      YEAR = {2022},
     PAGES = {Paper No. 108267, 84},
      ISSN = {0001-8708},
   MRCLASS = {55P42 (11S40 55N22 55Q50)},
  MRNUMBER = {4384614},
MRREVIEWER = {Guchuan Li},
       DOI = {10.1016/j.aim.2022.108267},
       URL = {https://doi-org.ezproxy.uky.edu/10.1016/j.aim.2022.108267},
}

\end{biblist}
\end{bibdiv}

\end{document}